\documentclass{amsart}
\usepackage{amsmath}
\usepackage{amscd}
\usepackage{amssymb}
\newcommand{\cal}{\mathcal}
\renewcommand{\div}{\operatorname{div}}

\newcommand{\Vol}{\operatorname{Vol}}

\newtheorem{theorem}{Theorem}[section]
\newtheorem{theorem/definition}{Theorem/Definition}[section]
\newtheorem{proposition}{Proposition}[section]
\newtheorem{lemma}{Lemma}[section]
\newtheorem{corollary}{Corollary}[section]
\theoremstyle{remark}
\newtheorem{remark}{Remark}[section]
\theoremstyle{definition}

\begin{document}
\title
{On Bach-Flat Gradient Shrinking Ricci Solitons}
\author{Huai-Dong Cao and Qiang Chen}
\address{Department of Mathematics\\ Lehigh University\\
Bethlehem, PA 18015} \email{huc2@lehigh.edu; qic208@lehigh.edu}

\begin{abstract}
In this paper, we classify $n$-dimensional ($n\ge 4$) complete
Bach-flat gradient shrinking Ricci solitons. More precisely, we
prove that any $4$-dimensional Bach-flat gradient shrinking Ricci
soliton is either Einstein, or locally conformally flat hence a finite quotient of the Gaussian
shrinking soliton $\mathbb R^4$ or the round cylinder $\mathbb
S^3\times \mathbb R$. More generally, for $n\ge 5$, a Bach-flat
gradient shrinking Ricci soliton is either Einstein, or a finite
quotient of the Gaussian shrinking soliton $\mathbb R^n$ or the
product $N^{n-1}\times \mathbb R$, where $N^{n-1}$ is Einstein.

\end{abstract}

\maketitle
\date{}

\footnotetext[0]{2000 {\sl Mathematics Subject Classification}.
Primary 53C21, 53C25.

The research of the first author was partially supported by NSF Grant
DMS-0909581.}

\section{The results}

A complete Riemannian manifold $(M^n, g_{ij})$ is called a {\it gradient Ricci soliton} 
if there exists a smooth function $f$ on $M^n$ such that the Ricci tensor $R_{ij}$ of the
metric $g_{ij}$ satisfies the equation
$$R_{ij}+\nabla_i\nabla_jf=\rho g_{ij}$$
for some constant $\rho$. For $\rho=0$ the Ricci soliton is {\it
steady}, for $\rho>0$ it is {\it shrinking} and for $\rho<0$ {\it
expanding}. The function $f$ is called a {\it potential function}
of the gradient Ricci soliton. Clearly, when $f$ is a constant the
gradient Ricci soliton is simply an Einstein manifold. Thus Ricci
solitons are natural extensions of Einstein metrics.  Gradient
Ricci solitons play an important role in Hamilton's Ricci flow as
they correspond to self-similar solutions, and often arise as
singularity models. Therefore it is important to classify gradient
Ricci solitons or understand their geometry.

In this paper we shall focus our attention on  gradient shrinking
Ricci solitons, which are possible Type I singularity models in
the Ricci flow. We normalize the constant $\rho=1/2$ so that the shrinking soliton equation is given by

\begin{equation*}
R_{ij}+\nabla_i\nabla_jf=\frac{1}{2} g_{ij}. \eqno(1.1)
\end{equation*}
In recent years, inspired by Perelman's work \cite{P1, P2}, much efforts have been devoted to study the 
geometry and classifications of gradient shrinking Ricci solitons.  We refer the reader to the survey papers \cite{Cao2010, Cao2011} by the first 
author and the references therein for recent progress on the subject. In particular, it is known (cf. \cite {P2, NW1, CCZ08}) 
that any complete 3-dimensional gradient shrinking Ricci soliton is a finite quotient of  either the round sphere 
$\mathbb S^3$, or the Gaussian shrinking soliton $\mathbb R^3$, or the round cylinder $\mathbb
S^2\times \mathbb R$. For higher dimensions, it has been proven that complete 
locally conformally flat gradient shrinking Ricci solitons are finite quotients of either the round sphere 
$\mathbb S^n$, or the Gaussian shrinking soliton $\mathbb R^n$, or the round cylinder $\mathbb
S^{n-1}\times \mathbb R$ (first due to Z. H. Zhang \cite{Zh2} based on the work of Ni-Wallach \cite{NW1}, see also the works of Eminenti-La Nave-Mantegazza  \cite {ELM}, 
Petersen-Wylie \cite{PW3}, X. Cao, B. Wang and Z. Zhang \cite{CWZ08}, and  Munteanu-Sesum \cite{MS}). 
Moreover, it follows from the works of Fern\'andez-L\'opez and Garc\'ia-R\'io \cite{FG},  and
Munteanu-Sesum \cite{MS} that $n$-dimensional complete
gradient shrinking solitons with harmonic Weyl tensor are rigid in the sense that they are finite quotients of the product of an Einstein manifold $N^k$ with
the Gaussian shrinking soliton $\mathbb R^{n-k}$.   

Our aim in this paper is to investigate an interesting class of
complete gradient shrinking Ricci solitons: those with vanishing
Bach tensor.  This well-known tensor was introduced by R. Bach \cite{Bach} in
early 1920s' to study conformal relativity.  On any
$n$-dimensional manifold $(M^n, g_{ij})$ ($n\ge 4$), the Bach
tensor is defined by
$$B_{ij} =\frac 1 {n-3}\nabla^k\nabla^l W_{ikjl}+\frac 1 {n-2}
R_{kl}W{_i}{^k}{_j}^l.$$
Here $W_{ikjl}$ is the Weyl tensor. It is easy to see that if $(M^n, g_{ij})$
is either locally conformally flat (i.e., $W_{ikjl}=0$) or Einstein, then $(M^n, g_{ij})$ is {\it Bach-flat}: $B_{ij}=0$. 

The case when $n=4$ is the most interesting,  as it is well-known (cf.
\cite{Besse} or \cite{Der}) that on any compact 4-manifold $(M^4,
g_{ij})$, Bach-flat metrics are precisely the critical points of
the following {\sl conformally invariant} functional on the space
of metrics,
$$\cal W (g)=\int_M |W_g|^2 dV_g,$$ where $W_g$ denotes the Weyl tensor of $g$.
Moreover, if $(M^4, g_{ij})$ is either {\it half conformally flat } (i.e., self-dual or
anti-self-dual) or {\it locally conformal to an Einstein
manifold}, then its Bach tensor vanishes.  In this paper, we shall see the (stronger) converse
holds for gradient shrinking solitons: {\it Bach-flat $4$-dimensional
gradient shrinking solitons are either Einstein or locally
conformally flat}.

Our main results are the following classification theorems for Bach-flat
gradient shrinking Ricci solitons:

\begin{theorem} Let $(M^4, g_{ij}, f)$ be a complete Bach-flat gradient  shrinking Ricci soliton. Then,  $(M^4, g_{ij}, f)$ is either

\medskip
(i) Einstein, or

\medskip
(ii) locally conformally flat, hence a finite quotient of either the Gaussian shrinking soliton $\mathbb
R^{4}$ or  the round cylinder $\mathbb S^3\times
\mathbb R$.
\end{theorem}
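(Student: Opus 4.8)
The plan is to reduce the theorem to the dichotomy that a complete Bach-flat $4$-dimensional gradient shrinking soliton must be either Einstein or locally conformally flat, and then to invoke the classification of complete locally conformally flat gradient shrinkers recalled in the introduction (finite quotients of $\mathbb{S}^4$, $\mathbb{R}^4$, or $\mathbb{S}^3\times\mathbb{R}$). In that classification the round sphere is Einstein and is absorbed into alternative (i), so the genuinely non-Einstein locally conformally flat cases are exactly the finite quotients of $\mathbb{R}^4$ and $\mathbb{S}^3\times\mathbb{R}$ appearing in (ii). Thus everything comes down to proving the dichotomy.

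First I would assemble the standard soliton identities from (1.1). Differentiating (1.1) and commuting covariant derivatives gives
$$\nabla_i R_{jk}-\nabla_j R_{ik}=R_{ijkl}\nabla^l f,$$
together with $\nabla_i R = 2R_{ij}\nabla^j f$ and $R+|\nabla f|^2-f=\text{const}$. Decomposing the Riemann tensor into its Weyl and Ricci--scalar parts, the displayed identity rewrites the Cotton tensor $C_{ijk}$ as $-W_{ijkl}\nabla^l f$ plus an explicit three-tensor $D_{ijk}$ assembled algebraically from $R_{ij}$, $R$, and $\nabla f$. This tensor $D_{ijk}$ is the central object: $D\equiv 0$ is a rigidity condition strictly stronger than vanishing of the Cotton tensor, and one checks directly that it fails for $\mathbb{S}^2\times\mathbb{R}^2$, the standard harmonic-Weyl shrinker that is neither Einstein nor locally conformally flat.

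The heart of the argument is to tie Bach-flatness to the vanishing of $D$. Using the contracted second Bianchi identity, which with suitable conventions reads $\nabla^l W_{ijkl}=-\tfrac{n-3}{n-2}C_{ijk}$, I would rewrite the Bach tensor of a gradient soliton entirely in terms of $C_{ijk}$, $D_{ijk}$, the Weyl tensor, and $\nabla f$; in $n=4$ I expect $B_{ij}$ to appear as a weighted divergence of $D$ plus an algebraic Weyl--$\nabla f$ term. Contracting with suitable test tensors, multiplying by the soliton weight $e^{-f}$, and integrating by parts against the drift Laplacian $\Delta_f=\Delta-\nabla f\cdot\nabla$ (self-adjoint for $e^{-f}\,dV_g$) should produce an identity of the schematic form
$$\int_M |D_{ijk}|^2\,e^{-f}\,dV_g = c\int_M B_{ij}\,\nabla^i f\,\nabla^j f\,e^{-f}\,dV_g.$$
The hard part will be justifying these manipulations on a complete noncompact manifold: I must control the boundary terms at infinity and guarantee convergence of the weighted integrals, which is where the quadratic growth of $f$ and the polynomial volume and curvature growth of complete shrinking solitons (Cao--Zhou) enter, via a cutoff exhaustion. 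Granting this, $B_{ij}=0$ forces $\int_M|D|^2 e^{-f}=0$, hence $D_{ijk}\equiv 0$.

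It remains to deduce the dichotomy from $D\equiv 0$. Pointwise, $D=0$ gives $C_{ijk}=-W_{ijkl}\nabla^l f$ and, more importantly, pins down the Ricci tensor and the second fundamental form of the regular level sets of $f$: on the open set where $\nabla f\neq 0$ the soliton is locally a warped product over these level sets, whose metric and curvature are rigidly constrained. Computing the Weyl tensor of this warped product and feeding back the relation $C_{ijk}=-W_{ijkl}\nabla^l f$ together with the shrinker equation, one finds that $W$ must vanish identically unless the soliton is Einstein; since shrinking solitons are real-analytic, the vanishing of $W$ on the set where $\nabla f\neq 0$ propagates to all of $M$. Hence the soliton is either Einstein or locally conformally flat, which is exactly the dichotomy, and the proof is complete.
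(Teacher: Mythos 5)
Your overall architecture coincides with the paper's: the tensor $D_{ijk}$, the identity $B_{ij}=-\frac{1}{n-2}\big(\nabla_k D_{ikj}+\frac{n-3}{n-2}C_{jli}\nabla_l f\big)$, the deduction that $D_{ijk}\equiv 0$ rigidifies the level-set geometry and hence forces $C_{ijk}=0$ and (in dimension $4$) $W_{ijkl}=0$ wherever $\nabla f\ne 0$, and the conclusion via analyticity together with the known classification of locally conformally flat shrinkers. The one step where you diverge --- and where your proposal has a genuine gap --- is the proof that Bach-flatness implies $D_{ijk}\equiv 0$.

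You propose a weighted identity $\int_M |D|^2 e^{-f}\,dV=c\int_M B_{ij}\nabla_i f\nabla_j f\, e^{-f}\,dV$ obtained via a cutoff exhaustion, and you defer its justification to ``the quadratic growth of $f$ and the polynomial volume and curvature growth of complete shrinking solitons (Cao--Zhou).'' But Cao--Zhou provides estimates only on the potential ($f\sim r^2/4$) and on volume growth; complete shrinking solitons carry no a priori pointwise curvature bounds, so there is no ``polynomial curvature growth'' to invoke. With a smooth cutoff, the error terms involve quantities like $\int |D|\,|\nabla f|^2 e^{-f}$ over annuli, whose finiteness would require weighted integral curvature estimates (in the spirit of Munteanu--Sesum) that you neither state nor prove; as written, the key lemma is not established. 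The gap is avoidable, and this is precisely the paper's trick: integrate the \emph{unweighted} quantity $B_{ij}\nabla_i f\nabla_j f$ over the compact sublevel sets $\Omega_r=\{f\le r\}$ (compact by Cao--Zhou). Since the outward normal of $\partial\Omega_r$ is $\nu=\nabla f/|\nabla f|$, the only boundary term produced by integration by parts is
$$\int_{\partial \Omega_r} D_{ikj}\nabla_if \nabla_jf \nabla_kf\,\frac{1}{|\nabla f|}\,dS,$$
which vanishes \emph{identically} --- not merely asymptotically --- because $D_{ijk}=-D_{jik}$. The interior terms combine, via the soliton equation and the trace-free and skew-symmetry of $D$, into $-\frac12\int_{\Omega_r}|D|^2\,dV$, so $B_{ij}\equiv 0$ gives $\int_{\Omega_r}|D|^2\,dV=0$ for every $r$ with no asymptotic control whatsoever. (The weight $e^{-f}$ is likewise unnecessary: if you keep it, the extra term it generates is again $D_{ikj}\nabla_if\nabla_jf\nabla_kf\,e^{-f}$, killed by the same algebraic identity.) Apart from this step, your concluding arguments (level-set rigidity, vanishing of $W$ in dimension $4$, analytic continuation, and the appeal to the classification) match the paper's Lemmas 4.2 and 4.3 and its final case analysis.
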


\medskip
More generally, for $n\ge 5$, we have:

\begin{theorem} Let $(M^n, g_{ij}, f)$ ($n\ge 5$) be a complete Bach-flat gradient  shrinking Ricci soliton. Then,  $(M^n, g_{ij}, f)$ is either

\medskip
(i) Einstein, or

\medskip
(ii) a finite quotient of the Gaussian shrinking soliton $\mathbb
R^{n}$, or

\medskip
(iii) a finite quotient of $N^{n-1}\times \mathbb R$, where
$N^{n-1}$ is an Einstein manifold of positive scalar curvature.
\end{theorem}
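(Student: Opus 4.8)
\section*{Proof proposal}

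The plan is to show that on such a soliton the Bach-flat condition forces the Cotton tensor to vanish, which for $n\ge 4$ is the same as having harmonic Weyl tensor, and then to invoke the cited rigidity theorem, refining its conclusion to the three listed cases. First I would record the structural identities available on a gradient shrinking soliton. Writing the Cotton tensor
$$C_{ijk}=\nabla_i R_{jk}-\nabla_j R_{ik}-\frac{1}{2(n-1)}(g_{jk}\nabla_i R-g_{ik}\nabla_j R),$$
the contracted second Bianchi identity gives $\nabla^l W_{ijkl}=\frac{n-3}{n-2}C_{ijk}$, so that $C_{ijk}\equiv 0$ is exactly equivalent to $\nabla^l W_{ijkl}=0$, i.e. to harmonic Weyl tensor. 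Commuting covariant derivatives in the soliton equation $(1.1)$ and decomposing the full curvature tensor into its Weyl, Ricci and scalar parts, one obtains the key soliton identity $C_{ijk}=-\frac{n-2}{n-3}\,W_{ijkl}\nabla^l f$; in particular $C_{ijk}=0$ holds if and only if $W_{ijkl}\nabla^l f=0$. Combining this with the general identity $B_{ij}=\frac{1}{n-2}\big(\nabla^k C_{ikj}+R_{kl}W_i{}^k{}_j{}^l\big)$ and the soliton relation $\nabla_k\nabla_l f=\tfrac12 g_{kl}-R_{kl}$, a direct computation expresses the Bach tensor of the soliton purely in curvature and gradient terms, of the form
$$B_{ij}=-\tfrac{1}{n-3}\,W_{ikjl}\nabla^k f\,\nabla^l f-\tfrac{1}{(n-2)(n-3)}\,R_{kl}W_i{}^k{}_j{}^l.$$

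The heart of the argument is a weighted integral estimate. I would integrate $|C_{ijk}|^2$ against the weight $e^{-f}$, use the soliton identity to replace one factor of $C$ by $W(\cdot,\cdot,\cdot,\nabla f)$, and integrate by parts in the drift sense, exploiting $\nabla^l f\,e^{-f}=-\nabla^l e^{-f}$. Feeding in $\nabla^l W_{ijkl}=\frac{n-3}{n-2}C_{ijk}$ and the explicit expression for $B_{ij}$ above, the aim is to collapse the integrand into a multiple of a Bach contraction, producing an identity of the schematic form $\int_M|C_{ijk}|^2 e^{-f}\,dV=c_n\int_M(\text{$B$-terms})\,e^{-f}\,dV$; when $B_{ij}\equiv 0$ the right-hand side vanishes and hence $C_{ijk}\equiv 0$. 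I expect this to be the main obstacle: producing the correct definite-sign identity after all cancellations (the term $R_{kl}W_i{}^k{}_j{}^l$, absent of the clean structure it has when $n=4$, must be controlled for $n\ge 5$), and justifying the integrations by parts on the complete noncompact $M^n$. For the latter I would invoke the standard growth estimates for shrinking solitons (quadratic growth of $f$ and at most polynomial volume growth), which guarantee convergence of all the $e^{-f}$-weighted curvature integrals and the vanishing of boundary terms at infinity.

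Once $C_{ijk}\equiv 0$, the soliton has harmonic Weyl tensor, so by the rigidity result of Fern\'andez-L\'opez--Garc\'ia-R\'io and Munteanu-Sesum it is a finite quotient of a product $N^m\times\mathbb R^{\,n-m}$ with $N^m$ Einstein and $\mathbb R^{\,n-m}$ the Gaussian factor, the potential being $f=\tfrac14|x|^2$ on the Euclidean factor so that $\nabla f$ spans those directions. Finally I would feed the equivalent condition $W_{ijkl}\nabla^l f=0$ back into this product structure: computing the mixed Weyl components for a Euclidean direction $\partial_s$ and $N$-directions $a,b$, one finds $W_{sabl}\nabla^l f$ proportional to $(n-1-m)\,g_{ab}\,\nabla^s f$. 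Since $\nabla f\neq 0$ whenever the Euclidean factor is nontrivial, this forces either $m=0$ (the Gaussian shrinking soliton $\mathbb R^n$) or $m=n-1$ (the product $N^{n-1}\times\mathbb R$, with $R^N_{ij}=\tfrac12 g^N_{ij}$ so that $N^{n-1}$ is Einstein of positive scalar curvature), while a nontrivial Euclidean factor of dimension at least two is impossible; the remaining possibility $\nabla f\equiv 0$ is the Einstein case. This yields precisely the three alternatives (i)--(iii).
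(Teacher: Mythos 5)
Your proposal fails at its foundation: the ``key soliton identity'' $C_{ijk}=-\tfrac{n-2}{n-3}W_{ijkl}\nabla^l f$ is false. The commutation-and-decomposition computation you describe actually yields
$$C_{ijk}=-W_{ijkl}\nabla_l f+D_{ijk},\qquad D_{ijk}=\tfrac{1}{n-2}\bigl(R_{jk}\nabla_i f-R_{ik}\nabla_j f\bigr)+\tfrac{1}{2(n-1)(n-2)}\bigl(g_{jk}\nabla_i R-g_{ik}\nabla_j R\bigr)-\tfrac{R}{(n-1)(n-2)}\bigl(g_{jk}\nabla_i f-g_{ik}\nabla_j f\bigr),$$
and the extra tensor $D_{ijk}$ (the central object of the paper, Lemma 3.1) does not vanish in general, nor can it be absorbed into a constant. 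Consequently your claimed equivalence ``$C_{ijk}=0$ if and only if $W_{ijkl}\nabla^l f=0$'' is wrong: the shrinking soliton $\mathbb{S}^{k}\times\mathbb{R}^{n-k}$ with $2\le k\le n-2$ is locally symmetric, hence has $C_{ijk}\equiv 0$, yet $W_{ijkl}\nabla_l f=D_{ijk}\neq 0$ there --- as your own product computation at the end shows, since $W_{sabl}\nabla^l f\propto (n-1-k)\,g_{ab}\nabla_s f$. This single error propagates through every stage: your displayed formula for $B_{ij}$ is incorrect (the correct one, (4.1) in the paper, is $B_{ij}=-\tfrac{1}{n-2}\bigl(\nabla_k D_{ikj}+\tfrac{n-3}{n-2}C_{jli}\nabla_l f\bigr)$, and the divergence-of-$D$ term you are missing is precisely the term that carries the argument); your hoped-for identity $\int_M|C_{ijk}|^2e^{-f}\,dV=c_n\int_M(\text{$B$-terms})\,e^{-f}\,dV$ cannot hold in that form, because what Bach-flatness controls is $\int_M|D_{ijk}|^2$, not $\int_M|C_{ijk}|^2$ (the paper proves the unweighted identity $\int_M B_{ij}\nabla_i f\nabla_j f\,dV=-\tfrac12\int_M|D_{ijk}|^2\,dV$, with boundary terms killed by the algebraic symmetries of $D$, not by the weight $e^{-f}$).

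The gap is equally fatal at the last step. From harmonic Weyl alone, the rigidity theorem of Fern\'andez-L\'opez--Garc\'ia-R\'io and Munteanu--Sesum allows \emph{all} products $N^{n-k}\times\mathbb{R}^{k}$, and these genuinely have $C_{ijk}=0$; so no argument whose only input is $C_{ijk}=0$ can exclude $2\le k\le n-1$. To run your (otherwise correct) exclusion computation you need $W_{ijkl}\nabla^l f=0$, which together with $C_{ijk}=0$ is exactly the condition $D_{ijk}=0$ --- strictly stronger than harmonic Weyl. The paper's route is: (a) Bach-flatness forces $D_{ijk}=0$ via the integral identity above (Lemma 4.1); (b) $D_{ijk}=0$ forces $C_{ijk}=0$ wherever $\nabla f\neq 0$, and this step is \emph{not} algebraic --- it requires the level-surface analysis (Propositions 3.1 and 3.2: constancy of $R$, $|\nabla f|$, $H$ on level sets, umbilicity, and the eigenvalue structure of Ricci) to kill the components $C_{abc}$ and $C_{1ab}$ one by one (Lemma 4.2); (c) the Einstein alternative is handled by real analyticity when $\nabla f$ vanishes on an open set; and (d) the cases $2\le k\le n-1$ are excluded using Proposition 3.2(e) (Ricci has eigenvalues of multiplicities $1$ and $n-1$ only), which again uses $D_{ijk}=0$, not merely $C_{ijk}=0$. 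Your proposal has no substitute for (a), (b), or (d).
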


\medskip
The basic idea in proving Theorem 1.1 and Theorem 1.2 is to
explore hidden relations between the Bach tensor $B_{ij}$ and the
Cotton tensor $C_{ijk}$ on a gradient shrinking Ricci soliton. It
turns out that the key link between these two classical tensors is provided
by a third tensor, the covariant 3-tensor  $D_{ijk}$ defined by 
$$ D_{ijk} = \frac{1} {n-2} (A_{jk} \nabla_i f- A_{ik} \nabla_jf) + \frac {1} {(n-1)(n-2)} (g_{jk}E_{il} - g_{ik}E_{jl})\nabla_lf, \eqno(1.2)$$
where $A_{ij}$ is the Schouten tensor and $E_{ij}$ is the Einstein tensor (see Section 3). 

This tensor  $D_{ijk}$ (and its equivalent version in Section 3) was introduced by the authors in
\cite{CC09} to study the classification of locally conformally
flat gradient steady solitons. On one hand,  for any gradient
Ricci soliton, it turns out that the Bach tensor $B_{ij}$ can be expressed in terms of $D_{ijk}$ and the 
Cotton tensor $C_{ijk}$: $$B_{ij} =-\frac 1{n-2}(\nabla_k D_{ikj}+\frac{n-3}{n-2}C_{jli}\nabla_l f).\eqno(1.3)$$
On the other hand, as shown in \cite{CC09},  $D_{ijk}$ is closely
related to the Cotton tensor and the Weyl tensor by
$$D_{ijk}=C_{ijk}+W_{ijkl}\nabla_l f. \eqno(1.4)$$
By using (1.3),  we are able to  show that  the vanishing of the
Bach tensor $B_{ij}$ implies the vanishing of $D_{ijk}$ for
gradient shrinking solitons (see Lemma 4.1). On the other hand, the norm of  $D_{ijk}$ is linked to the geometry 
of level surfaces of the potential function
$f$ by the following key identity (see Proposition 3.1): at any point $p\in M^n$ where $\nabla f(p)\ne 0$,
$$|D_{ijk}|^2=\frac{2|\nabla f|^4}{(n-2)^2} |h_{ab}-\frac{H}{n-1}g_{ab}|^2 +\frac{1}{2(n-1)(n-2)}
|\nabla_a R|^2,\eqno(1.5)$$ where $h_{ab}$ and $H$ are the second
fundamental form and the mean curvature for the level surface
$\Sigma=\{f=f(p)\}$, and $g_{ab}$ is the induced metric on the level surface $\Sigma$. Thus, the vanishing of $D_{ijk}$ and (1.5)
tell us that the geometry of the shrinking Ricci soliton and the
level surfaces of the potential function are very special
(see Proposition 3.2), and consequently we deduce that $D_{ijk}=0$
implies  the Cotton tensor $C_{ijk}=0$ at all points where
$|\nabla f|\ne 0$ (see Lemma 4.2 and Theorem 5.1).
Furthermore, when $n=4$, by using (1.4) we can actually show that
the Weyl tensor $W_{ijkl}$ must vanish at all points where $|\nabla f|\ne 0$ (see Lemma 4.3).
Then the main theorems follow immediately from the known classification theorem for
 locally conformally flat gradient shrinking Ricci solitons and the rigid theorem for  
gradient shrinking Ricci solitons with harmonic Weyl tensor respectively.

\begin{remark} Very recently, by cleverly using the tensor $D_{ijk}$, X. Chen and Y. Wang \cite{CW} have shown 
that 4-dimensional half-conformally flat  gradient shrinking Ricci solitons are
either Einstein, or locally conformally flat. Since half-conformal flat implies Bach-flat in dimension 4, our
Theorem 1.1 is clearly an improvement.
 \end{remark}

\medskip
Note that by  a theorem of Hitchin (cf. \cite{Besse}, Theorem
13.30), a compact 4-dimensional half-conformally flat Einstein
manifold (of positive scalar curvature) is $\mathbb S^4$ or
$\mathbb CP^{2}$. Combining Hitchin's theorem and Theorem 1.1, we
arrive at the following classification of 4-dimensional
compact half-conformally flat  gradient shrinking Ricci solitons which was
first obtained by X. Chen and Y. Wang \cite{CW}:

\begin{corollary} If $(M^4, g_{ij}, f)$ is a compact half-conformally flat gradient  shrinking Ricci soliton, then $(M^4, g_{ij})$ is isometric
to the standard $\mathbb S^4$ or $\mathbb CP^2$.
\end{corollary}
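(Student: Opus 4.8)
The plan is to derive the corollary directly from Theorem 1.1, Hitchin's theorem, and an elementary compactness argument; the real task is simply to verify that the hypotheses line up. First I would recall that in dimension four any half-conformally flat metric (self-dual or anti-self-dual) is automatically Bach-flat, as already noted in the introduction. Hence the compact soliton $(M^4, g_{ij}, f)$ satisfies the hypotheses of Theorem 1.1, which then yields the dichotomy that $(M^4, g_{ij}, f)$ is either Einstein, or locally conformally flat and thus a finite quotient of the Gaussian shrinking soliton $\mathbb R^4$ or of the round cylinder $\mathbb S^3\times\mathbb R$.

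The next step is to eliminate the locally conformally flat alternative by compactness. Both $\mathbb R^4$ and $\mathbb S^3\times\mathbb R$ are non-compact, and a finite quotient of a non-compact manifold remains non-compact, since such a quotient is finitely covered by the original manifold (were the quotient compact, the finite cover would be compact too). As $M^4$ is assumed compact, case (ii) of Theorem 1.1 cannot occur, and therefore $(M^4, g_{ij}, f)$ must be Einstein.

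It then remains only to identify the compact Einstein case. I would first check that the Einstein constant is positive: writing $R_{ij}=\mu g_{ij}$, the soliton equation (1.1) gives $\nabla_i\nabla_j f=(\tfrac12-\mu)g_{ij}$, whose trace is $\Delta f = 2-4\mu$; integrating over the compact $M^4$ forces $\mu=\tfrac12$, so that $f$ is harmonic hence constant, and the scalar curvature $R=4\mu=2>0$. With positive scalar curvature and half-conformal flatness established, Hitchin's theorem (cf. \cite{Besse}, Theorem 13.30) applies and identifies $(M^4, g_{ij})$ as the round $\mathbb S^4$ or as $\mathbb{CP}^2$ with its Fubini--Study metric.

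Since each step merely invokes a cited result, I expect no serious obstacle. The only points that genuinely require care are the trace-and-integrate argument guaranteeing positivity of the scalar curvature --- which is precisely the hypothesis needed to invoke Hitchin's theorem --- and the observation that compactness excludes the Gaussian and cylindrical models.
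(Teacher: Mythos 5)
Your proof is correct and follows essentially the same route as the paper: half-conformal flatness gives Bach-flatness, Theorem 1.1 yields the Einstein/locally-conformally-flat dichotomy, compactness rules out the quotients of $\mathbb R^4$ and $\mathbb S^3\times\mathbb R$, and Hitchin's theorem identifies the Einstein case. The only difference is that you make explicit two points the paper leaves implicit --- the non-compactness of finite quotients of non-compact models, and the trace-and-integrate argument showing $f$ is constant and $R>0$ so that Hitchin's theorem indeed applies --- which is a welcome but not essentially different elaboration.
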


Finally, in Section 5, we observe that for all gradient (shrinking, or steady, or expanding) Ricci solitons, the vanishing of $D_{ijk}$ implies the vanishing of the Cotton tensor
$C_{ijk}$ at all points where $|\nabla f|\ne 0$ (see Theorem 5.1). This yields the classification of $n$-dimensional ($n\ge 4$) gradient shrinking Ricci solitons, as well as $4$-dimensional 
gradient steady Ricci solitons,  with vanishing $D_{ijk}$.  

\begin{theorem} Let $(M^n, g_{ij}, f)$ ($n\ge 4$) be a complete gradient {\sl shrinking} Ricci soliton with 
$D_{ijk}=0$,  then 

\medskip
(i)  $(M^4, g_{ij}, f)$ is either Einstein, or a finite quotient of $\mathbb R^4$ or $\mathbb S^3\times
\mathbb R$; 

\medskip
(ii) for $n\ge 5$, $(M^n, g_{ij}, f)$ is either Einstein, or a finite quotient of the Gaussian shrinking soliton $\mathbb
R^{n}$, or  a finite quotient of  $N^{n-1}\times \mathbb R$, where
$N^{n-1}$ is Einstein.
\end{theorem}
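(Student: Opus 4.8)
The plan is to treat Theorem 1.3 as the endpoint of the chain already built for the Bach-flat case, but fed by the hypothesis $D_{ijk}=0$ directly instead of through Lemma 4.1. The first step is to invoke Theorem 5.1: from $D_{ijk}=0$ we obtain that the Cotton tensor vanishes, $C_{ijk}=0$, at every point where $\nabla f\neq 0$. If $f$ is constant, then $(1.1)$ reduces to $R_{ij}=\tfrac12 g_{ij}$ and the soliton is Einstein, so we land in case (i); hence we may assume $f$ is non-constant and set $\Omega=\{\nabla f\neq 0\}$. Since a gradient Ricci soliton is real-analytic in suitable coordinates, a non-constant $f$ cannot be locally constant on any open set, so $\Omega$ is open and dense. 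By continuity of $C_{ijk}$, its vanishing on $\Omega$ propagates to all of $M^n$, giving $C_{ijk}\equiv 0$.

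For $n=4$ I would use the identity $(1.4)$, $D_{ijk}=C_{ijk}+W_{ijkl}\nabla_l f$. On $\Omega$ both $D_{ijk}$ and $C_{ijk}$ vanish, so $W_{ijkl}\nabla_l f=0$ there; Lemma 4.3 then upgrades this to $W_{ijkl}=0$ on $\Omega$, and density of $\Omega$ together with continuity of the Weyl tensor yields $W_{ijkl}\equiv 0$, i.e. $(M^4,g_{ij})$ is locally conformally flat. The classification of complete locally conformally flat gradient shrinking solitons (\cite{Zh2, NW1}, and the further references in Section 1) then says the soliton is a finite quotient of $\mathbb S^4$, $\mathbb R^4$, or $\mathbb S^3\times\mathbb R$; as $\mathbb S^4$ is Einstein, this is exactly the trichotomy in (i).

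For $n\ge 5$ the global identity $C_{ijk}\equiv 0$ means $(M^n,g_{ij})$ has harmonic Weyl tensor, since in dimension $n\ge 4$ the divergence $\nabla^l W_{ijkl}$ is a nonzero constant multiple of $C_{ijk}$. The rigidity theorem for shrinking solitons with harmonic Weyl tensor (\cite{FG, MS}) then gives that $M^n$ is a finite quotient of a product $N^k\times\mathbb R^{n-k}$ with $N^k$ Einstein and $\mathbb R^{n-k}$ the Gaussian factor. The values $k=n$ (Einstein), $k=0$ (Gaussian $\mathbb R^n$), and $k=n-1$ ($N^{n-1}\times\mathbb R$) are the three conclusions in (ii); it remains to exclude $1\le k\le n-2$, in which both $N^k$ and a spherical factor of dimension $\ge 1$ are present. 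Here I would feed $D_{ijk}=0$ back into the key identity $(1.5)$. On the product the potential is $f=\tfrac14|x|^2$ on the Euclidean factor, the regular level sets $\{f=c\}$ are $N^k\times\mathbb S^{n-k-1}(r)$, and their second fundamental form vanishes along $N^k$ while equalling $\tfrac1r g_{ab}$ along $\mathbb S^{n-k-1}(r)$. Such a level set is totally umbilic only when one of the two factors degenerates, i.e. $k=0$ or $n-k-1=0$; but by $(1.5)$, $D_{ijk}=0$ forces $h_{ab}=\tfrac{H}{n-1}g_{ab}$, which is impossible for $1\le k\le n-2$. Hence $k\in\{0,n-1,n\}$ and (ii) follows.

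The step I expect to be the genuine obstacle is the last one for $n\ge 5$: the rigid theorem alone only delivers a product $N^k\times\mathbb R^{n-k}$ for an arbitrary $k$, so extra input is needed to collapse the list to three cases. The mechanism is the umbilicity of the level sets forced by $D_{ijk}=0$ through $(1.5)$, and the care lies in checking that a nontrivial Einstein factor together with a spherical factor of dimension $\ge 1$ genuinely violates $h_{ab}=\tfrac{H}{n-1}g_{ab}$; this is exactly what makes $D_{ijk}=0$ strictly stronger than harmonic Weyl (for instance $\mathbb{CP}^2\times\mathbb R^2$ has harmonic Weyl but $D_{ijk}\neq 0$). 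A secondary technical point is the analyticity-and-density argument used to push $C_{ijk}=0$ from $\Omega$ to all of $M^n$. The remaining ingredients---Theorem 5.1, Lemma 4.3, and the two cited classification theorems---are quoted directly, so the rest of the proof is assembly.
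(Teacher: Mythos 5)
Your proposal is correct and follows essentially the same route as the paper: the paper deduces this statement (its Corollary 5.1) directly from Theorem 5.1, the classification of locally conformally flat gradient shrinking solitons, the Fern\'andez-L\'opez--Garc\'ia-R\'io / Munteanu--Sesum rigidity theorem for harmonic Weyl tensor, and Proposition 3.2(e), with the Einstein alternative handled by exactly the analyticity-and-density argument you give (Case 2 of the proof of Theorem 1.1). The only minor variation is in ruling out the intermediate products $N^k\times\mathbb{R}^{n-k}$ with $1\le k\le n-2$: you use the umbilicity of the level sets of $f$ forced by (1.5) (i.e., Proposition 3.2(c)), whereas the paper invokes Proposition 3.2(e) on the Ricci eigenvalue multiplicities --- two equivalent consequences of $D_{ijk}=0$, each checked by the same computation on the product model.
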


\begin{theorem} Let $(M^4, g_{ij}, f)$  be a complete gradient {\sl steady} Ricci soliton with $D_{ijk}=0$,  then 
 $(M^4, g_{ij}, f)$ is either Ricci flat or isometric to the Bryant soliton.
\end{theorem}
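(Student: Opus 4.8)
The plan is to reduce Theorem 1.6 to the known classification of complete locally conformally flat gradient steady Ricci solitons, by showing that the hypothesis $D_{ijk}=0$ forces the Weyl tensor to vanish in dimension four. First I would dispose of the degenerate case: if $f$ is constant, then the steady soliton equation $R_{ij}=-\nabla_i\nabla_j f$ gives $R_{ij}=0$, so $(M^4,g_{ij})$ is Ricci flat and there is nothing more to prove. Since a gradient Ricci soliton is real-analytic in suitable coordinates, when $f$ is nonconstant the open set $U=\{p\in M^4:\nabla f(p)\ne 0\}$ is dense in $M^4$, and it suffices to work on $U$.

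On $U$ I would invoke Theorem 5.1, which is stated for all gradient Ricci solitons (steady included), to conclude that $D_{ijk}=0$ implies the vanishing of the Cotton tensor $C_{ijk}=0$ on $U$. Combining this with the identity (1.4), $D_{ijk}=C_{ijk}+W_{ijkl}\nabla_l f$, yields $W_{ijkl}\nabla_l f=0$ throughout $U$. At the same time I would record the structural consequences of $D_{ijk}=0$ furnished by the key identity (1.5) and Proposition 3.2: on $U$ the level sets of $f$ are totally umbilic, $\nabla f$ is an eigenvector of the Ricci tensor, and $R$ is constant on each level surface, so that $g_{ij}$ carries a local warped-product structure over the level sets of $f$.

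The crux is then the four-dimensional argument of Lemma 4.3: using this warped-product and eigenvalue structure of the Ricci tensor together with $C_{ijk}=0$ and $W_{ijkl}\nabla_l f=0$, one upgrades the single contraction $W_{ijkl}\nabla_l f=0$ to the full vanishing $W_{ijkl}=0$ on $U$. I expect this to be the main obstacle, because in general the vanishing of one contraction of $W$ against $\nabla f$ is much weaker than the vanishing of $W$ itself; it is only the special algebraic structure of the Weyl tensor in dimension four, namely its decomposition under the action on $2$-forms, combined with the soliton identities, that closes this gap. Granting it, $W_{ijkl}=0$ on the dense set $U$, hence $W_{ijkl}\equiv 0$ on all of $M^4$ by continuity, so $(M^4,g_{ij})$ is locally conformally flat.

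Finally I would appeal to the classification of complete locally conformally flat gradient steady Ricci solitons established in \cite{CC09}: such a soliton is either flat, in which case it is Ricci flat, or it is non-flat and rotationally symmetric, in which case it is isometric (up to scaling) to the Bryant soliton. This produces exactly the stated dichotomy and completes the proof. The only point requiring care beyond the four-dimensional Weyl computation is the passage from $U$ to $M^4$, which is handled by the density of $U$ together with the continuity of $W_{ijkl}$; thereafter the global classification of \cite{CC09} applies verbatim.
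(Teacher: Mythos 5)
Your proposal is correct and takes essentially the same route as the paper: the paper's own proof is the one-line combination of Theorem 5.1(i) (which rests on Lemmas 4.2 and 4.3 being valid for steady solitons) with the classification of complete locally conformally flat gradient steady Ricci solitons in \cite{CC09, CM}, exactly the chain you describe. Your extra care with the trivial case ($f$ constant, hence Ricci flat) and the analyticity/density argument for passing from $\{\nabla f\ne 0\}$ to all of $M^4$ mirrors the dichotomy the paper uses in the proof of Theorem 1.1, so nothing is missing.
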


\medskip 

\noindent {\bf Acknowledgments}.  We are very grateful to Professor S.-T. Yau for suggesting us to consider 4-dimensional self-dual gradient shrinking Ricci solitons in summer 2010 which  
in part inspired us to study Bach-flat gradient shrinking Ricci solitons. We would also like to thank Professor Richard Hamilton for his interest in our work, and the referee for 
very helpful comments and suggestions which made our paper more readable.

\section{Preliminaries}

In this section, we fix our notations and recall some basic facts
and known results about gradient Ricci solitons that we shall need
in the proof of Theorem 1.1 and Theorem 1.2.

First of all, we recall that on any $n$-dimensional Riemannian
manifold $(M^n, g_{ij} )$ ($n\ge 3 $),  the Weyl curvature tensor
is given by

\begin{align*}
W_{ijkl}  = & R_{ijkl} - \frac{1}{n-2}(g_{ik}R_{jl}-g_{il}R_{jk}-g_{jk}R_{il}+g_{jl}R_{ik})\\
& + \frac{R}{(n-1)(n-2)} (g_{ik}g_{jl}-g_{il}g_{jk}), \\
\end{align*}
and  the Cotton tensor by
 $$C_{ijk}=\nabla_i R_{jk}-\nabla_j R_{ik}-\frac {1}{2(n-1)} ( g_{jk} \nabla_i R - g_{ik} \nabla_j R).$$
 
\begin{remark} In terms of the Schouten tensor 
$$ A_{ij}= R_{ij}- \frac{R}{2(n-1)}g_{ij}, \eqno(2.1)$$  
$$ W_{ijkl}=R_{ijkl} - \frac{1}{n-2} (g_{ik}A_{jl}-g_{il}A_{jk}-g_{jk}A_{il}+g_{jl}A_{ik}), $$ 
$$C_{ijk}=\nabla_i A_{jk}-\nabla_j A_{ik}.$$
\end{remark}
 
 It is well known that, for $n=3$, $W_{ijkl}$ vanishes identically, while $C_{ijk}=0$ if and only if
$(M^3, g_{ij})$ is locally conformally flat; for $n\ge 4$,
$W_{ijkl}=0$ if and only if  $(M^n, g_{ij})$ is locally
conformally flat.  Moreover, for $n\ge 4$, the Cotton tensor
$C_{ijk}$ is, up to a constant factor, the divergence of the Weyl
tensor:
$$C_{ijk}=-\frac{n-2}{n-3} \nabla_l W_{ijkl}, \eqno(2.2)$$
hence the vanishing of the Cotton tensor $C_{ij k} = 0$ (in
dimension $n\ge 4$) is also referred as being harmonic Weyl.

Moreover, for $n\ge 4$, the Bach tensor is defined by
$$B_{ij}=\frac 1 {n-3}\nabla^k\nabla^l W_{ikjl}+\frac 1 {n-2}
R_{kl}W{_i}{^k}{_j}^l. $$ By (2.2), we have
$$B_{ij}=\frac 1 {n-2} (\nabla_k C_{kij} + R_{kl}W{_i}{^k}{_j}^l). \eqno(2.3)$$

Note that  $C_{ijk}$ is skew-symmetric in the
first two indices and trace-free in any two indices:
$$ C_{ijk}=-C_{jik} \quad \mbox{and} \quad g^{ij}C_{ijk}=g^{ik}C_{ijk}=0.\eqno(2.4)$$

Next we recall some  basic facts about complete gradient shrinking
Ricci solitons satisfying Eq. (1.1) .

\begin{lemma} {\bf (Hamilton \cite{Ha95F})} Let $(M^n, g_{ij}, f)$
be a complete gradient shrinking Ricci soliton satisfying Eq.
(1.1). Then we have
$$\nabla_iR=2R_{ij}\nabla_jf, \eqno(2.5)$$ and
$$R+|\nabla f|^2-f=C_0 $$ for some constant $C_0$. Here $R$
denotes the scalar curvature.
\end{lemma}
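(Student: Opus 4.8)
The plan is to prove both of Hamilton's identities directly from the soliton equation (1.1) together with the contracted second Bianchi identity, exploiting the fact that the potential $f$ is a gradient function so that $\nabla_i\nabla_j f$ is symmetric and the commutator of covariant derivatives is controlled by the curvature tensor.

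For the first identity (2.5), I would start by taking the trace of (1.1), which gives $R+\Delta f=\tfrac{n}{2}$, and then take one further covariant derivative of (1.1) itself. The key input is the contracted second Bianchi identity $\nabla_j R_{ij}=\tfrac12\nabla_i R$ (equivalently $\nabla^j R_{ij}=\tfrac12\nabla_i R$), which says the divergence of the Ricci tensor is half the gradient of scalar curvature. Differentiating (1.1) as $\nabla_j R_{ij}+\nabla_j\nabla_i\nabla_j f=0$ and commuting the third derivatives via the Ricci identity $\nabla_j\nabla_i\nabla_j f-\nabla_i\nabla_j\nabla_j f=R_{ij}\nabla_j f$ (using that $\nabla_i\nabla_j f$ is symmetric), one replaces $\nabla_j\nabla_j f=\Delta f$ by $\tfrac{n}{2}-R$ from the trace equation. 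The terms then combine: the $\nabla_i(\Delta f)=-\nabla_i R$ contribution cancels against $\tfrac12\nabla_i R$ in a way that leaves exactly $\nabla_i R=2R_{ij}\nabla_j f$. I would carry out this bookkeeping carefully, since the main subtlety is tracking the sign and the curvature term produced by commuting derivatives.

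For the second identity, I would show that the function $R+|\nabla f|^2-f$ is constant by proving its gradient vanishes. Computing $\nabla_i(|\nabla f|^2)=2\nabla_j f\,\nabla_i\nabla_j f$ and substituting $\nabla_i\nabla_j f=\tfrac12 g_{ij}-R_{ij}$ from (1.1) gives $\nabla_i(|\nabla f|^2)=\nabla_i f-2R_{ij}\nabla_j f$. Adding $\nabla_i R$ and using the already-established identity (2.5), the term $\nabla_i R-2R_{ij}\nabla_j f$ vanishes, leaving $\nabla_i(R+|\nabla f|^2)=\nabla_i f=\nabla_i(f)$. Hence $\nabla_i(R+|\nabla f|^2-f)=0$, so the quantity equals a constant $C_0$ on each connected component, and by completeness and connectedness of $M^n$ it is globally constant.

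The main obstacle, though it is a routine one, is the careful handling of the commutator of third-order covariant derivatives in establishing (2.5): one must correctly apply the Ricci identity to $\nabla_j\nabla_i\nabla_j f$ and ensure the curvature term emerges with the right sign so that it produces the Ricci contraction $R_{ij}\nabla_j f$ rather than some other contraction. Once (2.5) is in hand, the second identity follows almost immediately by the gradient computation above, so essentially all of the real work is concentrated in the first part. Both statements are purely local differential-geometric identities, so no global analysis beyond connectedness is needed for the constancy conclusion.
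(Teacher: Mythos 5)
Your proposal is correct: the divergence of (1.1) combined with the contracted second Bianchi identity, the commutation $\nabla_j\nabla_i\nabla_j f=\nabla_i\Delta f+R_{ij}\nabla_j f$ (valid since the Hessian is symmetric), and the traced equation $\Delta f=\tfrac n2-R$ yields (2.5), after which $\nabla_i\bigl(R+|\nabla f|^2-f\bigr)=0$ follows by the substitution you describe. The paper gives no proof of this lemma, citing Hamilton instead, and your argument is precisely the standard one from that source, so there is nothing to compare beyond noting the match.
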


Note that if we normalize $f$ by adding the constant $C_0$ to it,
then we have
$$R+|\nabla f|^2=f. \eqno(2.6)$$

\begin{lemma} Let $(M^n, g_{ij}, f)$ be a complete gradient steady soliton. Then it has nonnegative
scalar curvature $R\ge 0$.
\end{lemma}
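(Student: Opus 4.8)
The plan is to reduce the statement to a minimum principle for a drift Laplacian. First I would record the two standard identities for a gradient steady soliton $R_{ij}+\nabla_i\nabla_j f=0$, the steady analogues of Lemma 2.1: namely $\nabla_i R=2R_{ij}\nabla_j f$ (derived exactly as $(2.5)$, by taking the divergence of the soliton equation and commuting derivatives, independently of $\rho$) and $R+|\nabla f|^2=C_0$ for some constant $C_0$ (which follows by differentiating and using $\nabla_i\nabla_j f=-R_{ij}$). In particular $R\le C_0$, so $R$ is bounded above. Writing $\Delta_f:=\Delta-\nabla f\cdot\nabla$ for the $f$-Laplacian, I would then derive the key elliptic equation
$$\Delta_f R=-2|R_{ij}|^2.$$
This comes from applying $\nabla^i$ to $\nabla_i R=2R_{ij}\nabla_j f$, using the contracted second Bianchi identity $\nabla^i R_{ij}=\tfrac12\nabla_j R$ and the soliton equation to replace $\nabla_i\nabla_j f$; it is precisely the $\rho=0$ case of the general soliton identity $\Delta_f R=2\rho R-2|R_{ij}|^2$.

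The heart of the matter is then a pointwise minimum argument. By Cauchy--Schwarz $|R_{ij}|^2\ge R^2/n$, so
$$\Delta_f R\le -\tfrac{2}{n}R^2\le 0,$$
i.e. $R$ is a supersolution for $\Delta_f$. If the infimum of $R$ were attained at an interior point $p$, then $\nabla R(p)=0$ and $\Delta R(p)\ge 0$ would give $\Delta_f R(p)\ge 0$, while the equation forces $\Delta_f R(p)\le 0$; hence $|R_{ij}|(p)=0$ and therefore $R(p)=0$, so that $\inf R=0\ge 0$ and the lemma would follow.

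The main obstacle is that a nontrivial steady soliton is noncompact, so the infimum of $R$ need not be attained and could a priori be $-\infty$; controlling $R$ at infinity is the only genuine difficulty. I would resolve it by exploiting the weighted structure: for a steady soliton the Bakry--\'Emery Ricci tensor $R_{ij}+\nabla_i\nabla_j f$ \emph{vanishes identically}, so $(M^n,g,e^{-f}dV)$ has nonnegative Bakry--\'Emery Ricci curvature and $\Delta_f$ admits an Omori--Yau/weak maximum principle. Combined with the inequality $\Delta_f R\le-\tfrac{2}{n}R^2$, this upgrades the pointwise argument above to the complete noncompact setting and rules out $\inf R<0$ (the quadratic right-hand side is exactly what furnishes a Keller--Osserman type a priori bound, and hence control of $R$ at infinity). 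Alternatively, one may argue through the Ricci flow: a steady soliton generates an \emph{eternal} self-similar solution, and B.-L. Chen's lower bound $R\ge -n/(2t)$ for complete solutions of the Ricci flow, applied with the initial time tending to $-\infty$, yields $R\ge 0$ directly.
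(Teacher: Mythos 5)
Your proposal is correct in substance, but you should know that the paper does not actually prove this lemma: it simply cites B.-L.\ Chen's theorem that $R\ge 0$ holds for any complete ancient solution of the Ricci flow (a steady soliton generates an eternal, hence ancient, self-similar solution), and it points to Pigola--Rimoldi--Setti for an ``alternative proof.'' Your closing alternative --- flowing the soliton and letting the initial time go to $-\infty$ in Chen's bound $R\ge -n/(2t)$ --- is therefore exactly the paper's route, while your primary argument is precisely the Pigola--Rimoldi--Setti route. Your computations there are all correct: $\nabla_i R=2R_{ij}\nabla_j f$, the constancy of $R+|\nabla f|^2$, the identity $\Delta_f R=-2|R_{ij}|^2$, and the consequence $\Delta_f R\le -\tfrac{2}{n}R^2$, with the interior-minimum argument settling the compact case. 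You also correctly identify the one genuine difficulty in the noncompact case: $\inf R$ need not be attained and $R$ is not a priori bounded below (the identity $R=C_0-|\nabla f|^2$ only bounds it above). Closing that gap is the real content of the elliptic proof, and your appeal to the weak/Omori--Yau maximum principle for $\Delta_f$ (valid here since the Bakry--\'Emery tensor $R_{ij}+\nabla_i\nabla_j f$ vanishes, so the weighted manifold is $f$-stochastically complete) together with a Keller--Osserman-type estimate to handle possible unboundedness is the correct and published way to do it --- but as written it is an invocation of nontrivial machinery rather than a self-contained argument, so it is on the same footing as the paper's citation of Chen. In terms of trade-offs: the elliptic route stays entirely within the soliton equation and weighted-manifold analysis and generalizes at once to shrinkers and expanders (giving $R\ge 0$ and $R\ge n\rho$ respectively), whereas the flow route is a one-line deduction once one imports Chen's theorem, which is proved by a localized parabolic maximum-principle argument requiring no curvature or boundedness hypotheses.
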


Lemma 2.2 is a special case of a more general result of B.-L. Chen
\cite{BChen} which states that $R\ge 0$ for any ancient solution
to the Ricci flow. For an alternative proof of Lemma 2.2, see,
e.g., the more recent work of Pigola-Rimoldi-Setti \cite{PRS}.

\begin{lemma} {\bf (Cao-Zhou \cite{CZhou08})} Let $(M^n, g_{ij}, f)$
be a complete noncompact gradient shrinking Ricci soliton
satisfying (1.1) and the normalization (2.6). Then,

\medskip
(i) the potential function $f$ satisfies the estimates
$$\frac{1}{4} (r(x)-c_1)^2\leq f(x)\leq \frac{1}{4} (r(x)+c_2)^2,$$
where $r(x)=d(x_0, x)$ is the distance function from some fixed
point $x_0\in M$, $c_1$ and $c_2$ are positive constants depending
only on $n$ and the geometry of $g_{ij}$ on the unit ball $B(x_0,
1)$;

\medskip
(ii) there exists some constant $C>0$ such that
$$ \Vol(B(x_0, s))\leq C s^{n}$$
for $s>0$ sufficiently large.
\end{lemma}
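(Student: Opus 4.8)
\emph{Setup and upper bound.} The plan is to extract both estimates from the soliton structure, using the normalization $R+|\nabla f|^2=f$ in (2.6) together with the fact that $R\ge 0$ on any complete gradient shrinking soliton. The latter is not contained in Lemma 2.2 (which is stated for steady solitons), but it follows from the same result of B.-L. Chen, since a complete shrinking soliton generates a complete ancient solution of the Ricci flow, on which $R\ge 0$. Granting this, (2.6) gives $|\nabla f|^2=f-R\le f$. After adding a constant we may assume $f\ge 0$ and that $f$ attains its minimum, so at every point where $f>0$ we have $|\nabla\sqrt f|=|\nabla f|/(2\sqrt f)\le 1/2$. Integrating this bound along a minimal geodesic from a fixed $x_0$ to $x$ yields $\sqrt{f(x)}\le \tfrac12 r(x)+\sqrt{f(x_0)}$, which is precisely the upper estimate $f(x)\le\tfrac14\big(r(x)+c_2\big)^2$ with $c_2=2\sqrt{f(x_0)}$.

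\emph{Lower bound.} This is the substantive half of (i). Fix a unit-speed minimal geodesic $\gamma(s)$ from $x_0$ to $x$ and set $\varphi(s)=f(\gamma(s))$. Contracting the soliton equation (1.1) with $\gamma'$ gives the pointwise identity $\varphi''(s)=\tfrac12-\mathrm{Ric}(\gamma',\gamma')$, so that $\varphi'(r)-\varphi'(0)=\tfrac r2-\int_0^r\mathrm{Ric}(\gamma',\gamma')\,ds$. The key is therefore to bound $\int_0^r\mathrm{Ric}(\gamma',\gamma')\,ds$ from above by a constant independent of $r$, and this is where the minimality of $\gamma$ enters. Taking a parallel orthonormal frame $\{\gamma',e_2,\dots,e_n\}$ along $\gamma$ and inserting the fields $\phi(s)e_a$ (with $\phi$ a piecewise-linear cutoff equal to $1$ on $[1,r-1]$ and tapering to $0$ at the two ends) into the second variation of arc length, nonnegativity of the index form gives $\int_0^r\phi^2\,\mathrm{Ric}(\gamma',\gamma')\,ds\le (n-1)\int_0^r(\phi')^2\,ds\le 2(n-1)$. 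Combined with the gradient bound $|\nabla f|\le\sqrt f$ to absorb the boundary contribution of $\varphi'$ at $x$, this produces $\int_0^r\mathrm{Ric}(\gamma',\gamma')\,ds\le C$. Feeding this into the identity for $\varphi'$ gives $\varphi'(s)\ge \tfrac s2-C'$ for $s$ large, and integrating once more yields $f(x)=\varphi(r)\ge\tfrac14\big(r(x)-c_1\big)^2$.

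\emph{Volume estimate.} For (ii) I would deduce the volume bound from (i). Tracing (1.1) gives $\Delta f=\tfrac n2-R\le\tfrac n2$. Writing $D(r)=\{f\le r\}$ and $V(r)=\Vol(D(r))$, the divergence theorem gives $\int_{\{f=r\}}|\nabla f|\,dA=\int_{D(r)}\Delta f\,dV\le\tfrac n2V(r)$, while the coarea formula gives $V'(r)=\int_{\{f=r\}}|\nabla f|^{-1}\,dA$. On the level set $\{f=r\}$ we have $|\nabla f|^2=r-R$, which for large $r$ is comparable to $r$ (using $R\ge 0$ and that the level sets eventually avoid the critical set of $f$); combining the two relations then yields a differential inequality of Gronwall type, $V'(r)\le\tfrac{n}{2r}\,(1+o(1))\,V(r)$, whose integration gives $V(r)\le C\,r^{n/2}$. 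Finally, part (i) shows that $D(r)$ is trapped between the balls $B(x_0,2\sqrt r-c_1')$ and $B(x_0,2\sqrt r+c_2')$, so setting $s=2\sqrt r$ converts $V(r)\le C r^{n/2}$ into $\Vol(B(x_0,s))\le C s^{n}$ for $s$ large.

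\emph{Main obstacle.} The delicate point is the integral Ricci estimate $\int_0^r\mathrm{Ric}(\gamma',\gamma')\,ds\le C$ used in the lower bound: although $\mathrm{Ric}$ is not sign-definite on a shrinker, the second-variation argument uses only that $\gamma$ minimizes, and the care needed to treat the cutoff near the far endpoint $x$—so that the resulting constant $C$, and hence $c_1$, depends only on $n$ and the geometry on $B(x_0,1)$—is the technical heart of the proof. Closing the volume differential inequality, which requires ruling out a large contribution from $R$ on the level sets, is of comparable difficulty.
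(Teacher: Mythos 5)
The paper itself contains no proof of this lemma: it is quoted as a known result from Cao--Zhou \cite{CZhou08}, so your proposal has to be measured against the original argument there, and for part (i) your strategy is essentially that argument. Two remarks. First, you need not ``add a constant'' to arrange $f\ge 0$: the normalization (2.6) is already fixed, and with $R\ge 0$ (which, as you correctly observe, holds for shrinkers by B.-L.~Chen's theorem on ancient solutions rather than by Lemma 2.2 as stated) it forces $f=R+|\nabla f|^2\ge 0$ automatically; adding a constant would destroy (2.6). Second, your intermediate claim $\int_0^r\operatorname{Ric}(\gamma',\gamma')\,ds\le C$ with $C$ independent of $r$ cannot be obtained directly from the index form: after writing $\operatorname{Ric}(\gamma',\gamma')=\tfrac12-\varphi''$ on the last unit segment and integrating by parts, the cutoff produces the boundary term $-\varphi'(r)$, which is a priori of size $r/2$. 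In the genuine proof this term cancels against the same $\varphi'(r)$ in the identity $\varphi'(r)-\varphi'(0)=\tfrac r2-\int_0^r\operatorname{Ric}(\gamma',\gamma')\,ds$, leaving a lower bound on $\int_{r-1}^{r}\varphi'\,ds$, i.e.\ on $|\nabla f|$ somewhere on the last unit segment, which is then transported to $x$ by the Lipschitz bound $|\nabla\sqrt f|\le\tfrac12$; the uniform integral Ricci bound is only true a posteriori. Your phrase about absorbing the boundary contribution of $\varphi'$ at $x$ gestures at exactly this cancellation, so I read part (i) as correct in outline, with the technical heart correctly identified.

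Part (ii), however, has a genuine gap. You close the differential inequality by asserting that on $\{f=r\}$ the quantity $|\nabla f|^2=r-R$ is ``comparable to $r$'' for large $r$. Nothing available rules out $R$ being close to $f$ on portions of a level set: one only knows $0\le R\le f$, and ``the level sets avoid the critical set'' gives $|\nabla f|>0$, not $|\nabla f|^2\ge cr$. The Cao--Zhou proof avoids this issue entirely. Setting $V(r)=\Vol\{f\le r\}$ and $\chi(r)=\int_{\{f\le r\}}R\,dV$, the divergence theorem together with the coarea formula and the exact relation $|\nabla f|^2=r-R$ on $\{f=r\}$ yield the identity $rV'(r)-\tfrac n2V(r)=\chi'(r)-\chi(r)$; one then integrates $\bigl(r^{-n/2}V(r)\bigr)'=r^{-n/2-1}\bigl(\chi'(r)-\chi(r)\bigr)$, handles the $\chi'$ term by a further integration by parts, and absorbs the resulting $\chi$-terms using $0\le\chi(r)\le\tfrac n2V(r)$ (the upper bound coming from $\int_{\{f=r\}}|\nabla f|\,dA\ge0$), concluding $V(r)\le Cr^{n/2}$ for large $r$ and hence $\Vol(B(x_0,s))\le Cs^n$ via part (i). So your framework (divergence theorem, coarea, trapping sublevel sets between balls) is the right one, but the closure requires this bookkeeping with $\chi$ --- which tolerates $R$ being large on level sets --- and the pointwise comparability you propose is unjustified and would fail as a proof step.
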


\section{The covariant 3-tensor $D_{ijk}$}

In this section, we review the covariant 3-tensor $D_{ijk}$ introduced in our previous work
\cite{CC09} and its important properties.  

For any gradient Ricci soliton  satisfying the defining equation
$$R_{ij}+\nabla_i\nabla_jf=\rho g_{ij},\eqno (3.1)$$ the covariant 3-tensor $D_{ijk}$ is defined
as:
\begin{align*}
 D_{ijk} = & \frac{1}{n-2}(R_{jk} \nabla_i f- R_{ik} \nabla_j f) +\frac{1}{2(n-1)(n-2)} (g_{jk}\nabla_i R-g_{ik} \nabla_j R)\\
              & - \frac{R}{(n-1)(n-2)} (g_{jk}\nabla_i f  - g_{ik} \nabla_j f ).
 \end{align*}
Note that, by using (2.5), $D_{ijk}$ can also be expressed as 
$$ D_{ijk} = \frac{1} {n-2} (A_{jk} \nabla_i f- A_{ik} \nabla_jf) + \frac {1} {(n-1)(n-2)} (g_{jk}E_{il} - g_{ik}E_{jl})\nabla_lf, \eqno(3.2)$$
where $A_{ij}$ is the Schouten tensor in (2.1) and $E_{ij}=R_{ij}-\frac {R} {2} g_{ij}$ is the Einstein tensor. 

This 3-tensor $D_{ijk}$ is closely tied to the Cotton tensor  and
played a significant role in our previous work \cite{CC09} on classifying
locally conformally flat gradient steady solitons, as well as in
the subsequent works of S. Brendle \cite{Brendle} and X. Chen and
Y. Wang \cite{CW}.

\begin{lemma} 
Let $(M^n, g_{ij}, f)$ ($n\ge 3$) be a complete gradient soliton
satisfying (3.1). Then $D_{ijk}$ is related to the Cotton tensor
$C_{ijk}$ and the Weyl tensor $W_{ijkl}$ by
$$ D_{ijk}=C_{ijk}+W_{ijkl}\nabla_l f.$$
\end{lemma}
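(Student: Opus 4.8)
The plan is to prove the identity by a direct tensorial computation: expand the Weyl tensor in terms of the full Riemann and Ricci tensors, contract with $\nabla_l f$, and use the soliton equation (3.1) together with Hamilton's identity (2.5) to convert everything into the curvature quantities appearing in $D_{ijk}$ and $C_{ijk}$. The engine of the proof is a commutation identity for the third covariant derivatives of the potential $f$. Differentiating the soliton equation in the form $\nabla_j\nabla_k f=\rho g_{jk}-R_{jk}$ and antisymmetrizing in $i$ and $j$, the parallel term $\rho g_{jk}$ drops out and the commutator of covariant derivatives produces the contracted Riemann tensor, giving
$$R_{ijkl}\nabla_l f=\nabla_j R_{ik}-\nabla_i R_{jk}.$$
Comparing the right-hand side with the definition of the Cotton tensor from Section 2 immediately rewrites this as
$$R_{ijkl}\nabla_l f=-C_{ijk}-\frac{1}{2(n-1)}(g_{jk}\nabla_i R-g_{ik}\nabla_j R).$$

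Next I would substitute this into the Weyl decomposition
$$W_{ijkl}=R_{ijkl}-\frac{1}{n-2}(g_{ik}R_{jl}-g_{il}R_{jk}-g_{jk}R_{il}+g_{jl}R_{ik})+\frac{R}{(n-1)(n-2)}(g_{ik}g_{jl}-g_{il}g_{jk}),$$
contracting with $\nabla_l f$. The only simplification needed is Hamilton's identity (2.5), which gives $R_{il}\nabla_l f=\tfrac12\nabla_i R$; this turns the Ricci-times-metric terms into a combination of $\nabla R$ pieces and $R_{jk}\nabla_i f-R_{ik}\nabla_j f$ pieces. Collecting everything and adding $C_{ijk}$ to both sides, the $R_{jk}\nabla_i f-R_{ik}\nabla_j f$ term appears with coefficient $\tfrac1{n-2}$ and the $g_{jk}\nabla_i f-g_{ik}\nabla_j f$ term with coefficient $-R/\big((n-1)(n-2)\big)$, matching the first and third terms of $D_{ijk}$. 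The two sources of $\nabla R$ contributions combine as $-\tfrac{1}{2(n-1)}+\tfrac{1}{2(n-2)}=\tfrac{1}{2(n-1)(n-2)}$, which is exactly the coefficient of the $g_{jk}\nabla_i R-g_{ik}\nabla_j R$ term in $D_{ijk}$. Term by term this yields $W_{ijkl}\nabla_l f+C_{ijk}=D_{ijk}$, as claimed. Equivalently, one may carry out the same contraction using the Schouten-tensor forms $W_{ijkl}=R_{ijkl}-\frac{1}{n-2}(g_{ik}A_{jl}-g_{il}A_{jk}-g_{jk}A_{il}+g_{jl}A_{ik})$, $C_{ijk}=\nabla_i A_{jk}-\nabla_j A_{ik}$, and expression (3.2) for $D_{ijk}$, after unpacking $E_{il}\nabla_l f=\tfrac12\nabla_i R-\tfrac R2\nabla_i f$.

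The computation is elementary once the soliton identity is established, so I do not expect a conceptual obstacle; the delicate point is purely the bookkeeping of signs and conventions. In particular, the sign of $R_{ijkl}\nabla_l f$ is fixed by the curvature convention implicit in the stated Weyl formula --- in which the Ricci tensor is the trace $g^{ik}R_{ijkl}$ --- and getting this sign right is precisely what makes the two $\nabla R$ contributions cancel down to the single coefficient $\tfrac{1}{2(n-1)(n-2)}$ rather than to $\tfrac{2n-3}{2(n-1)(n-2)}$. I would therefore fix the curvature convention against the Weyl formula at the outset and track each of the resulting terms explicitly to avoid sign slips.
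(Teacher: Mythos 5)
Your proposal is correct and follows essentially the same route as the paper: differentiate the soliton equation and commute derivatives to get $R_{ijkl}\nabla_l f=\nabla_j R_{ik}-\nabla_i R_{jk}$, convert $\nabla R$ terms via Hamilton's identity (2.5), substitute the Weyl decomposition, and collect coefficients. The coefficient bookkeeping you outline (including the combination $-\tfrac{1}{2(n-1)}+\tfrac{1}{2(n-2)}=\tfrac{1}{2(n-1)(n-2)}$) matches the paper's computation exactly.
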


\begin{proof} From the soliton equation (3.1), we have
$$\nabla_i R_{jk}-\nabla_j R_{ik}=-\nabla_i\nabla_j \nabla_k f +\nabla_j\nabla_i \nabla_k f=-R_{ijkl}\nabla_l f.$$
Hence,  using (2.5), we obtain
\begin{align*}
 C_{ijk} =&\nabla_i R_{jk}-\nabla_j R_{ik}-\frac {1}{2(n-1)} ( g_{jk} \nabla_i R - g_{ik} \nabla_j R)\\
= & -R_{ijkl} \nabla_l f - \frac{1}{(n-1)} (g_{jk}R_{il}-g_{ik} R_{jl})\nabla_l f\\
             = & -W_{ijkl} \nabla_l f -\frac{1}{n-2}(R_{ik} \nabla_j f- R_{jk} \nabla_i f)\\
                &  + \frac{1}{2(n-1)(n-2)} (g_{jk}\nabla_i R -g_{ik} \nabla_j R)
             +\frac{R}{(n-1)(n-2)} (g_{ik} \nabla_j f - g_{jk}\nabla_i f)\\
             = & -W_{ijkl} \nabla_l f +D_{ijk}.
 \end{align*}
\end{proof}

\begin{remark}
By Lemma 3.1, $D_{ijk}$ is equal to the Cotton tensor $C_{ijk}$ in dimension $n=3$.  In addition, it is easy to see that 
$$D_{ijk}\nabla_k f=C_{ijk} \nabla_k f
$$
Also, $D_{ijk}$ vanishes if $(M^n, g_{ij}, f)$ ($n\ge 3$) is either Einstein or locally conformally flat. 
Moreover, like the Cotton tensor $C_{ijk}$, $D_{ijk}$ is skew-symmetric in the first two indices and trace-free in any 
two indices:   
$$ D_{ijk}=-D_{jik} \quad \mbox{and} \quad g^{ij}D_{ijk}=g^{ik}D_{ijk}=0. \eqno(3.3)$$ 
\end{remark}

What is so special about $D_{ijk}$ is the following key identity,
which links  the norm of $D_{ijk}$ to the geometry of the level
surfaces of the potential function $f$. We refer readers to \cite{CC09} (cf. Lemma 4.4 in \cite{CC09}) for its proof.

\begin{proposition} {\bf (Cao-Chen \cite{CC09})}  Let $(M^n, g_{ij}, f)$ ($n\ge 3$) be an $n$-dimensional
gradient Ricci soliton satisfying (3.1). Then, at any point $p\in
M^n$ where $\nabla f(p)\ne 0$, we have
$$|D_{ijk}|^2=\frac{2|\nabla f|^4}{(n-2)^2} |h_{ab}-\frac{H}{n-1}g_{ab}|^2 +\frac{1}{2(n-1)(n-2)}
|\nabla_a R|^2,$$ where $h_{ab}$ and $H$ are the second
fundamental form and the mean curvature of the level surface
$\Sigma=\{f=f(p)\}$, and $g_{ab}$ is the induced metric on $\Sigma$.
\end{proposition}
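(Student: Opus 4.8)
The plan is to fix, at a point $p$ where $\nabla f(p)\neq 0$, an adapted orthonormal frame $\{e_1,\dots,e_n\}$ in which $e_n=\nabla f/|\nabla f|$ is normal to the level surface $\Sigma=\{f=f(p)\}$ and $e_1,\dots,e_{n-1}$ are tangent to $\Sigma$; I use indices $a,b,c,\dots$ for the tangential directions $1,\dots,n-1$. In this frame $\nabla_i f=|\nabla f|\,\delta_{in}$, so only the $n$-th component of $\nabla f$ survives, and by the Hamilton identity (2.5) one has $\nabla_i R=2R_{in}|\nabla f|$; in particular the tangential gradient satisfies $\nabla_a R=2R_{an}|\nabla f|$, equivalently $R_{an}=\nabla_a R/(2|\nabla f|)$.

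First I would record the link between the level-surface geometry and the Ricci curvature. Since the second fundamental form of $\Sigma$ is $h_{ab}=\nabla_a\nabla_b f/|\nabla f|$, the soliton equation (3.1) gives $h_{ab}=(\rho\,g_{ab}-R_{ab})/|\nabla f|$ and hence $H=((n-1)\rho-(R-R_{nn}))/|\nabla f|$. Substituting, the constant $\rho$ cancels and
\begin{equation*}
h_{ab}-\frac{H}{n-1}g_{ab}=-\frac{1}{|\nabla f|}\Big(R_{ab}+\frac{R_{nn}-R}{n-1}g_{ab}\Big).
\end{equation*}

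Next I would compute the components of $D_{ijk}$ directly from the definition (3.2), substituting $\nabla_i f=|\nabla f|\delta_{in}$ and $\nabla_i R=2R_{in}|\nabla f|$. By the skew-symmetry $D_{ijk}=-D_{jik}$ the nonzero components fall into three families: the tangential-normal block $D_{nab}=-\frac{|\nabla f|^2}{n-2}\big(h_{ab}-\frac{H}{n-1}g_{ab}\big)$, the mixed components $D_{nan}=\frac{|\nabla f|}{n-1}R_{an}$, and the fully tangential components $D_{abc}=\frac{|\nabla f|}{(n-1)(n-2)}(\delta_{bc}R_{an}-\delta_{ac}R_{bn})$, while $D_{abn}=0$. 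Expanding $|D_{ijk}|^2=\sum_{i,j,k}D_{ijk}^2$ and accounting for the skew-symmetry factors yields
\begin{equation*}
|D_{ijk}|^2=2\sum_{a,b}D_{nab}^2+2\sum_a D_{nan}^2+\sum_{a,b,c}D_{abc}^2.
\end{equation*}

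The first sum reproduces the second-fundamental-form term exactly, via the displayed identity above. For the remaining two sums I would use $\sum_a R_{an}^2=|\nabla_a R|^2/(4|\nabla f|^2)$ together with the elementary contraction $\sum_{a,b,c}(\delta_{bc}R_{an}-\delta_{ac}R_{bn})^2=2(n-2)\sum_a R_{an}^2$; then the arithmetic identity $\frac{1}{(n-1)^2}+\frac{1}{(n-1)^2(n-2)}=\frac{1}{(n-1)(n-2)}$ combines the $D_{nan}$ and $D_{abc}$ contributions into the single term $\frac{1}{2(n-1)(n-2)}|\nabla_a R|^2$, completing the identity. The main difficulty is purely organizational: laying out the component computation cleanly and tracking the skew-symmetry factors of $2$ correctly. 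The one genuinely conceptual point is the cancellation of the soliton constant $\rho$ in the trace-free second fundamental form, which is exactly what makes the first term depend only on $R_{ab}$ and hence match the tangential-normal block $D_{nab}$.
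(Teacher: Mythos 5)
Your proof is correct: all component computations ($D_{nab}$, $D_{nan}$, $D_{abc}$, $D_{abn}=0$), the skew-symmetry counting factors, and the final arithmetic check out, and the cancellation of $\rho$ you highlight is indeed the crux. The paper itself defers the proof of Proposition 3.1 to Lemma 4.4 of \cite{CC09}, and your argument is essentially that proof: a direct computation of the components of $D_{ijk}$ in an orthonormal frame adapted to the level surface (the reference takes $e_1$, rather than $e_n$, as the normal direction, which is immaterial).
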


Finally, thanks to Proposition 3.1, the vanishing of $D_{ijk}$ implies
many nice properties about the geometry of the Ricci soliton
$(M^n, g_{ij}, f)$ and the level surfaces of the potential
function $f$.

\begin{proposition}  
Let $(M^n, g_{ij}, f)$ ($n\ge 3 $) be any complete gradient Ricci
soliton with  $D_{ijk}=0$, and let  $c$ be a regular value of $f$
and $\Sigma_c=\{f=c\}$ be the level surface of $f$. Set
$e_1=\nabla f /|\nabla f |$ and pick any orthonormal frame $e_2,
\cdots, e_n$ tangent to the level surface $\Sigma_c$. Then

 \medskip

(a) $|\nabla f|^2$ and the scalar curvature $R$ of $(M^n,
g_{ij}, f)$ are constant on $\Sigma_c$;

\smallskip
(b) $R_{1a}=0$ for any $a\ge 2$ and $e_1=\nabla f /|\nabla f |$
is an eigenvector of $Rc$;

\smallskip

(c) the second fundamental form $h_{ab}$ of $\Sigma_c$ is of the
form $h_{ab}=\frac{H}{n-1} g_{ab}$;

\smallskip

(d) the mean curvature $H$ is constant on $\Sigma_c$;

\smallskip

(e) on $\Sigma_c$, the Ricci tensor of $(M^n, g_{ij}, f)$ either
has a unique eigenvalue $\lambda$, or has two distinct eigenvalues
$\lambda$ and $\mu$ of multiplicity $1$ and $n-1$ respectively. In
either case, $e_1=\nabla f /|\nabla f |$ is an eigenvector of
$\lambda$

\end{proposition}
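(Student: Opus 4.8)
The plan is to assume $D_{ijk}=0$ and extract the five geometric conclusions from the key identity in Proposition 3.1 together with the soliton structure equations. The starting point is the observation that $D_{ijk}=0$ forces the right-hand side of the identity in Proposition 3.1 to vanish. Since both terms there are nonnegative, each must vanish separately: we obtain $|\nabla_a R|^2=0$ and $|h_{ab}-\frac{H}{n-1}g_{ab}|^2=0$ at every point where $\nabla f\neq 0$. The first of these says the tangential derivatives of $R$ along the level surface $\Sigma_c$ vanish, which is precisely the statement that $R$ is constant on $\Sigma_c$; this immediately gives half of part (a). The second is exactly the umbilicity statement $h_{ab}=\frac{H}{n-1}g_{ab}$, which is part (c).

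Next I would establish that $|\nabla f|^2$ is constant on $\Sigma_c$, completing part (a). For this I use the conservation identity $R+|\nabla f|^2=f$ from Lemma 2.1 (normalization (2.6)): since $f\equiv c$ is constant on $\Sigma_c$ by definition, and $R$ is now known to be constant on $\Sigma_c$, it follows that $|\nabla f|^2=c-R$ is constant on $\Sigma_c$ as well. For part (b), I would differentiate $f$ tangentially and use (2.5) from Lemma 2.1, namely $\nabla_i R=2R_{ij}\nabla_j f$. Writing $\nabla_i R=2R_{ij}\nabla_j f=2|\nabla f|R_{i1}$ (using $e_1=\nabla f/|\nabla f|$), the vanishing of the tangential components $\nabla_a R=0$ for $a\ge 2$ yields $R_{1a}=0$ for all $a\ge 2$. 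This is exactly the assertion that $e_1$ is an eigenvector of the Ricci tensor, which is part (b).

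For part (d), the constancy of the mean curvature $H$ on $\Sigma_c$, I would relate $H$ to the ambient geometry via the soliton equation (1.1). Taking the trace of the second fundamental form and using $h_{ab}=-\nabla_a\nabla_b f/|\nabla f|$ (the standard formula for a level surface with $e_1=\nabla f/|\nabla f|$), the soliton equation $\nabla_i\nabla_j f=\tfrac{1}{2}g_{ij}-R_{ij}$ expresses $H$ in terms of $R$, $R_{11}$, and $|\nabla f|$, all of which are now known to be constant on $\Sigma_c$; the constancy of $H$ follows. Finally, part (e) is a direct consequence of parts (b) and (c): part (b) shows $e_1$ is a Ricci eigenvector with some eigenvalue $\lambda=R_{11}$, and restricting the soliton equation to the umbilical level surface (using (c) to control $\nabla_a\nabla_b f$) shows that the tangential Ricci tensor $R_{ab}$ is a constant multiple $\mu$ of $g_{ab}$, so the tangent space $T\Sigma_c$ is a single Ricci eigenspace. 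This forces at most two distinct eigenvalues, $\lambda$ of multiplicity one and $\mu$ of multiplicity $n-1$, collapsing to a single eigenvalue when $\lambda=\mu$.

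I expect the main obstacle to be part (e), specifically verifying that the Ricci tensor restricted to the tangent directions is genuinely a scalar multiple of the induced metric rather than merely block-diagonal. The umbilicity from (c) controls $\nabla_a\nabla_b f$, but to pin down $R_{ab}$ I must carefully combine the soliton equation with the second fundamental form formula and the already-established constancy facts; the subtlety is ensuring the off-diagonal tangential Ricci components also vanish and that the diagonal ones coincide. The cleanest route is probably to feed the umbilicity condition directly back into a refined computation of $D_{ijk}$ or to re-examine the Gauss equation relating intrinsic and extrinsic curvature on $\Sigma_c$, since the vanishing of $D_{ijk}$ encodes strong constraints linking $R_{ab}$ to $h_{ab}$.
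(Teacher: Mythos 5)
Your treatment of (a), (b), (c), and (e) matches the paper's proof: the two nonnegative terms in Proposition 3.1 must vanish separately, giving $\nabla_a R=0$ and umbilicity; (2.6) upgrades constancy of $R$ to constancy of $|\nabla f|^2$; (2.5) converts $\nabla_a R=0$ into $R_{1a}=0$; and feeding umbilicity into $R_{ab}=\rho g_{ab}-|\nabla f|\,h_{ab}$ shows the tangential Ricci tensor equals $\mu\, g_{ab}$ with $\mu=\rho-\frac{H|\nabla f|}{n-1}$. The ``obstacle'' you anticipate in (e) is not one: once $h_{ab}\propto g_{ab}$, the soliton equation forces the off-diagonal tangential Ricci components to vanish automatically, which is exactly how the paper concludes. (Your sign convention $h_{ab}=-\nabla_a\nabla_b f/|\nabla f|$ is opposite to the paper's, but this is immaterial.)

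The genuine gap is in (d). Tracing the second fundamental form via the soliton equation gives $H|\nabla f|=(n-1)\rho-R+R_{11}$, so your route requires $R_{11}$ to be constant on $\Sigma_c$; but at that stage you only know that $R$ and $|\nabla f|$ are constant and that $R_{1a}=0$. Your assertion that $R_{11}$ is ``now known to be constant'' is unjustified, and in fact the paper only establishes constancy of $\lambda=R_{11}$ in the proof of (e), \emph{after} and \emph{using} (d) (via $\lambda=R-(n-1)\mu$), so as written your argument is circular. Two ways to repair it: (i) the paper's own proof of (d) avoids $R_{11}$ entirely --- trace the Codazzi equation $R_{1cab}=\nabla^{\Sigma_c}_a h_{bc}-\nabla^{\Sigma_c}_b h_{ac}$ over $b,c$ to get $R_{1a}=\frac{n-2}{n-1}\nabla_a H$, so (b) gives $\nabla_a H=0$; or (ii) justify constancy of $R_{11}$ directly: since (a) holds on every nearby level surface (all of which are regular near $\Sigma_c$), $R$ is locally a function of $f$, hence $e_1(R)=R'(f)|\nabla f|$ is constant on $\Sigma_c$, and (2.5) gives $R_{11}=e_1(R)/(2|\nabla f|)$, which is then constant. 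Without one of these patches, (d) --- and with it the constancy of $\mu$ needed in (e) --- is unproven.
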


\begin{proof} Clearly (a) and (c) follow immediately from $D_{ijk}=0$, Proposition 3.1, and (2.6);

(b) follows from (a) and (2.5): $R_{1a}=\frac{1} {2|\nabla
f|}\nabla_a R=0$;

For (d), we consider the Codazzi equation
$$R_{1cab}=\nabla_a^{\Sigma_c}h_{bc}-\nabla_b^{\Sigma_c}h_{ac}, \qquad a, b, c=2, \cdots, n. \eqno(3.4)$$ 
Tracing over b and c in (3.4),  we obtain
$$R_{1a}=\nabla_a^{\Sigma_c} H- \nabla_b^{\Sigma_c}h_{ab}=(1-\frac{1}{n-1})\nabla_a H.$$
Then (d) follows since $R_{1a}=0$;

Finally, the second fundamental form is given by
$$h_{ab}=<\nabla_a \frac{\nabla f}{|\nabla f|},e_b>=\frac{\nabla_a\nabla_b f}{|\nabla f|}=\frac{\rho g_{ab}-R_{ab}}{|\nabla f|}.$$
Combining this with (c),  we see that $$R_{ab} =\rho g_{ab}-|\nabla
f|h_{ab}=(\rho -\frac{H}{n-1}|\nabla f|)  g_{ab}.$$ But both $H$
and $|\nabla f|$ are constant on $\Sigma_c$, so the Ricci tensor
restricted to the tangent space of $\Sigma_c$ has only one
eigenvalue $\mu$: $$\mu=R_{aa}=\rho - H |\nabla f|/(n-1),  \quad a=2, \cdots, n, \eqno(3.5)$$ which is constant along
$\Sigma_c$. On the other hand,
$$\lambda=R_{11}=R-\sum_{a=2}^n R_{aa}=R- (n-1)\rho + H|\nabla f|, \eqno(3.6)$$ again
a constant along $\Sigma_c$. This proves (e).

\end{proof}

\begin{remark} 
In any neighborhood $U$ of the level surface $\Sigma_c$ where  $|\nabla f|^2\neq 0$,  we can always express the metric $g_{ij}$
as $$ds^2=\frac {1}{|\nabla f|^2(f, \theta)}(df)^2  + g_{ab}(f, \theta) d\theta^{a} d\theta^{b}.
\eqno(3.7)$$
Here $\theta=(\theta^{2}, \cdots, \theta^{n})$ denotes any local coordinates on $\Sigma_c$. It follows from Proposition 3.2 that, when $D_{ijk}=0$, 
the metric $g_{ij}$ is in fact a warped product metric on U of the form: $$ ds^2=dr^2+\varphi^2(r)\bar g_{\Sigma_{c}}, \eqno(3.8)$$ where   
$\bar g_{\Sigma_{c}}$ denotes the induced metric on $\Sigma_c$. Furthermore, $(\Sigma_c, \bar g_{\Sigma_{c}})$ is necessarily Einstein. The 
details can be found in \cite{Cao et al}. 

\end{remark}

\section{The proof of Theorem 1.1 and Theorem 1.2}

Throughout this section,  we assume that $(M^n, g_{ij}, f)$ ($n\ge
4$) is a complete gradient shrinking soliton satisfying (1.1).

First of all, we relate the Bach tensor $B_{ij}$ to the Cotton
tensor $C_{ijk}$ and the tensor $D_{ijk}$, and then show that the Bach-flatness implies $D_{ijk}=0$.

\begin{lemma} Let $(M^n, g_{ij}, f)$ be a complete gradient shrinking soliton. If $B_{ij}=0$, then $D_{ijk}=0$.
\end{lemma}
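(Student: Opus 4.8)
The plan is to prove $D_{ijk}\equiv 0$ by establishing the weighted integral identity $\int_M |D_{ijk}|^2 e^{-f}\,dV=0$ and then invoking positivity of the integrand. The starting point is the divergence formula (1.3): since $B_{ij}=0$, it collapses to $\nabla_k D_{ikj}=-\frac{n-3}{n-2}\,C_{jli}\nabla_l f$. The goal is to convert this first-order relation for the \emph{divergence} of $D$ into a coercive energy identity for $D$ itself, taken with respect to the drift measure $e^{-f}\,dV$ that is natural on a shrinking soliton.

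First I would pair the reduced identity with a suitable $2$-tensor — built from $D$, $\nabla f$, and the Ricci/Schouten data — and integrate against $e^{-f}\,dV$, using the weighted integration-by-parts rule $\int_M(\nabla_k X_k)\,e^{-f}\,dV=\int_M X_k\nabla_k f\,e^{-f}\,dV$ (valid once the decay issues below are settled). To simplify the resulting expression I would repeatedly invoke: the soliton equation in the form $\nabla_i\nabla_j f=\tfrac12 g_{ij}-R_{ij}$; Hamilton's identity (2.5), $\nabla_i R=2R_{ij}\nabla_j f$; the algebraic symmetries (3.3), namely $D_{ijk}=-D_{jik}$ and the trace-freeness $g^{ij}D_{ijk}=g^{ik}D_{ijk}=0$; and the contraction identity $D_{ijk}\nabla_k f=C_{ijk}\nabla_k f$ from Remark 3.1 (the Weyl contribution in (1.4) drops because $W_{ijkl}$ is skew in $k,l$). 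The cross terms produced by the right-hand side $C_{jli}\nabla_l f$ and by differentiating the weight should then reorganize, via these symmetries, into a single term proportional to $|D_{ijk}|^2$, so that Bach-flatness forces $\int_M |D_{ijk}|^2 e^{-f}\,dV=0$ and hence $D_{ijk}\equiv 0$.

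The remaining, and genuinely technical, point is to justify the integration by parts on a complete \emph{noncompact} soliton: one must know that $\int_M |D_{ijk}|^2 e^{-f}\,dV<\infty$ and that the boundary terms vanish. Here I would insert a cutoff $\phi_s$ that equals $1$ on $B(x_0,s)$, is supported in $B(x_0,2s)$, and satisfies $|\nabla\phi_s|\le C/s$, and then let $s\to\infty$. The estimates of Cao–Zhou (Lemma 2.3) supply exactly what is needed: $f\ge\tfrac14(r-c_1)^2$ forces Gaussian decay of the weight $e^{-f}$, while $\Vol(B(x_0,s))\le C s^{n}$ together with the bound $R\le f$ coming from the normalization (2.6) keep the relevant curvature quantities growing at most polynomially. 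The Gaussian factor then dominates all polynomial error and boundary contributions, so they vanish in the limit and the weighted energy integral converges.

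The main obstacle is precisely this noncompact analytic justification, together with the bookkeeping in the second step: one must verify that, after all the substitutions, the terms collapse \emph{exactly} to a positive multiple of $|D_{ijk}|^2$, with no residual uncontrolled terms. On a compact soliton the argument is immediate — there is no boundary and convergence is automatic — so the real content lies in the a priori growth control that makes the weighted integration by parts legitimate in the complete case.
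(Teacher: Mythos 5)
Your overall strategy---turning Bach-flatness plus the divergence identity (1.3)/(4.1) into an energy identity $\int |D_{ijk}|^2=0$ by pairing with a tensor built from $\nabla f$ and integrating by parts---is the same as the paper's, and the algebra you hope for does in fact work: the right pairing is with $\nabla_i f\nabla_j f$, which kills the Cotton term pointwise (it becomes $C_{jli}\nabla_l f\nabla_j f\nabla_i f=0$ by skew-symmetry of $C$ in its first two indices); the Hessian produced by integration by parts becomes $\tfrac12 g_{jk}-R_{jk}$ via the soliton equation; the trace part dies by (3.3); and the surviving Ricci term contracts against the definition (3.2) of $D$ to give exactly a multiple of $|D_{ijk}|^2$ after antisymmetrizing. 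You leave this pairing unspecified (``a suitable $2$-tensor'') and defer the collapse to ``bookkeeping,'' which is a real omission, but the fatal problem is in your analytic justification.

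With distance-ball cutoffs $\phi_s$, the error term you must control is, schematically, $\int_M D_{ikj}\nabla_i f\nabla_j f\,\nabla_k\phi_s\,e^{-f}dV$, and to dominate it you need pointwise (or at least weighted $L^2$) control of $|D_{ijk}|$. But $|D_{ijk}|$ involves the full Ricci tensor and $\nabla R=2R_{ij}\nabla_j f$, and neither is controlled by the normalization (2.6): the identity $R+|\nabla f|^2=f$ bounds only the scalar curvature and $|\nabla f|$, not $|R_{ij}|$. So your claim that Lemma 2.3 together with $R\le f$ keeps ``the relevant curvature quantities growing at most polynomially'' is false as stated; a weighted $L^2$ bound such as $\int_M|R_{ij}|^2e^{-f}dV<\infty$ is a nontrivial theorem of Munteanu--Sesum, not a consequence of (2.6), and even that does not immediately control $\int_M|D_{ijk}|^2e^{-f}dV$ because of the extra $|\nabla f|^2$ factors in $D$. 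The paper sidesteps all of this: it integrates (unweighted) over the compact sublevel sets $\Omega_r=\{f\le r\}$ furnished by Lemma 2.3, and the boundary term
$$\int_{\partial\Omega_r}D_{ikj}\nabla_i f\nabla_j f\,\nu_k\,dS$$
vanishes \emph{identically} for every $r$, because on $\partial\Omega_r$ the outward normal is $\nu=\nabla f/|\nabla f|$ and $D_{ikj}\nabla_i f\nabla_k f=0$ by the skew-symmetry in (3.3). Hence $\int_{\Omega_r}|D_{ijk}|^2dV=0$ for each $r$ outright, with no decay estimates, no a priori integrability, and no limiting error terms. Your argument can be repaired the same way: replace the distance-ball cutoffs by cutoffs that are functions of $f$ (or by the exhaustion $\Omega_r$ itself); then $\nabla\phi_s$ is parallel to $\nabla f$, the error term is identically zero by the same skew-symmetry, and all of the Gaussian-decay considerations become unnecessary.
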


\begin{proof} 

By direct computations, and using (2.2), (2.3) and Lemma 3.1, we have

\begin{align*}
B_{ij} &=-\frac 1 {n-2} \nabla_k C_{ikj}+\frac 1 {n-2} R_{kl}W_{ikjl}\\
&=-\frac 1{n-2}\nabla_k(D_{ikj}-W_{ikjl}\nabla_l f)+\frac 1 {n-2} R_{kl}W_{ikjl}\\
&=-\frac 1{n-2}(\nabla_k D_{ikj}-\nabla_k W_{jlik}\nabla_l f) +\frac 1 {n-2} (R_{kl}+\nabla_k\nabla_lf)W_{ijkl}.\\
\end{align*}
Hence, 
$$B_{ij} =-\frac 1{n-2}(\nabla_k D_{ikj}+\frac{n-3}{n-2}C_{jli}\nabla_l f). \eqno(4.1)$$

Next, we use (4.1) to show that Bach flatness implies the vanishing of the tensor $D_{ijk}$.  By Lemma 2.3, for each $r>0$ sufficiently large, 
$\Omega_r=\{x\in M | f(x) \le r\}$ is compact.  Now by the definition of $D_{ijk}$, the identity (4.1), as well as properties (2.4) and (3.3), we have

\begin{align*}
\int_{\Omega_r} B_{ij}\nabla_i f\nabla_j f dV&= -\frac {1} {(n-2)}\int_{\Omega_r} \nabla_k D_{ikj} \nabla_i f\nabla_j f dV\\
&=\frac {1} {(n-2)}\big(\int_{\Omega_{r}} D_{ikj} \nabla_i f \nabla_k\nabla_j f dV - \int_{\Omega_{r}} \nabla_k (D_{ikj}\nabla_i f\nabla_jf)dV\big)\\
&=-\frac {1} {(n-2)} \big(\int_{\Omega_{r}} D_{ikj} \nabla_i f R_{jk} dV + \int_{\partial \Omega_r} D_{ikj}\nabla_if \nabla_jf \nu_k dS\big)\\
&=-\frac {1} {2(n-2)}\int_{\Omega_{r}} D_{ikj} (\nabla_i f R_{jk} -\nabla_k f R_{ij}) dV \\
&=-\frac {1} {2}\int _{\Omega_{r}} |D_{ikj}|^2 dV.
\end{align*}
Here we have used the fact, in view of (3.3), that 
$$ \int_{\partial \Omega_r} D_{ikj}\nabla_if \nabla_jf \nu_k dS=\int_{\partial \Omega_r} D_{ikj}\nabla_if\nabla_jf \nabla_kf\frac{1}{|\nabla f|}dS=0.$$
By taking $r\to \infty$, we immediately obtain  
$$\int_M B_{ij}\nabla_i f\nabla_j f dV =-\frac {1} {2}\int_M |D_{ikj}|^2 dV.$$
This completes the proof of Lemma 4.1.

\end{proof}

\begin{lemma} Let $(M^n, g_{ij}, f)$ ($n\ge 4$) be a complete gradient shrinking Ricci soliton with vanishing $D_{ijk}$, then the Cotton tensor $C_{ijk}=0$ at all points
where $\nabla f\ne 0$.
\end{lemma}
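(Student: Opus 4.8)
The plan is to establish $C_{ijk}=0$ by a direct computation in the adapted orthonormal frame furnished by Proposition 3.2, relying only on the special structure of the Schouten tensor there together with the soliton Hessian identity. First I would fix a regular level set $\Sigma_c=\{f=c\}$, set $e_1=\nabla f/|\nabla f|$, and complete it to a local orthonormal frame $e_1,e_2,\dots,e_n$ with $e_2,\dots,e_n$ tangent to $\Sigma_c$. By Proposition 3.2, in this frame the Ricci tensor, hence the Schouten tensor $A_{ij}=R_{ij}-\frac{R}{2(n-1)}g_{ij}$, is diagonal of the special form $A_{11}=P$, $A_{ab}=Q\,\delta_{ab}$, $A_{1a}=0$ for $a,b\ge 2$, where $P$ and $Q$ are functions that are \emph{constant on each level set} (because $R$, $\lambda$ and $\mu$ are, by parts (a) and (e)). The computation is then driven by the formula $C_{ijk}=\nabla_iA_{jk}-\nabla_jA_{ik}$ of Remark 2.1.

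To differentiate I need the connection coefficients of the adapted frame, and here the soliton structure does the essential work. From $\nabla_i\nabla_j f=\tfrac12 g_{ij}-R_{ij}$ together with $R_{1a}=0$ (Proposition 3.2(b)) one obtains $\nabla_{e_1}e_1=0$, i.e. the $e_1$-curves are geodesics; Proposition 3.2(c) gives $\nabla_{e_a}e_1=\eta\,e_a$ with $\eta=H/(n-1)$, and consequently $\nabla_{e_a}e_b=-\eta\,\delta_{ab}\,e_1+\nabla^{\Sigma}_{e_a}e_b$. Metric-compatibility of the tangential connection makes $\langle\nabla^{\Sigma}_{e_a}e_b,e_c\rangle$ antisymmetric in $b,c$; this, combined with $A_{1a}=0$ and with $e_a(P)=e_a(Q)=0$, is what forces most terms to cancel.

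With this in hand I would evaluate each family of components. The components $C_{ij1}$ vanish (equivalently $C_{ijk}\nabla_kf=D_{ijk}\nabla_kf=0$ by Remark 3.2). For the purely tangential $C_{abc}$, each of $\nabla_aA_{bc}$ and $\nabla_bA_{ac}$ vanishes: the normal contributions drop out because $A_{1c}=0$, and the tangential pieces collapse via $e_a\langle e_b,e_c\rangle=0$. The remaining components $C_{1ab}$ come out proportional to $\delta_{ab}$ (the off-diagonal ones die by antisymmetry of the frame rotation $\langle\nabla_{e_1}e_a,e_b\rangle$ and by the $\delta_{ab}$ factors in $\nabla_aA_{1b}$), leaving on the diagonal a single scalar $C_{1aa}=Q'-\eta(P-Q)$, where $\,'$ denotes the $e_1$-derivative.

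The one genuinely non-formal step, which I expect to be the crux, is showing this last scalar vanishes. Rather than grinding out $Q'=\eta(P-Q)$ from the soliton identities for $e_1(R)$ and $e_1(\mu)$, I would invoke the trace-free property $g^{ik}C_{ijk}=0$ from (2.4): taking $j=1$ gives $\sum_a C_{a1a}=-\sum_a C_{1aa}=-(n-1)\big(Q'-\eta(P-Q)\big)=0$, forcing the diagonal scalar to vanish. Hence every component of $C_{ijk}$ is zero. The only point requiring care is that the argument is local to a neighborhood of each regular level set, precisely where the adapted frame and Proposition 3.2 are available; since $\{\nabla f\ne 0\}$ is covered by such neighborhoods, the conclusion holds throughout $\{\nabla f\ne 0\}$.
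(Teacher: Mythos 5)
Your proof is correct, and it takes a genuinely different route from the paper's. The paper converts $D_{ijk}=0$ into the relation $C_{ijk}=-W_{ijkl}\nabla_l f$ via Lemma 3.1, and then kills the relevant Weyl components: $W_{1abc}=R_{1abc}=0$ from the Codazzi equation together with the constancy of $H$ and $R_{1a}=0$, and $W_{1a1b}=0$ from an explicit curvature computation $Rm(\nu,\partial_a,\nu,\partial_b)=\frac{Rc(\nu,\nu)}{n-1}g_{ab}$ combined with the two-eigenvalue structure of Proposition 3.2(e). You bypass the Weyl tensor entirely and compute $C_{ijk}=\nabla_iA_{jk}-\nabla_jA_{ik}$ directly in the adapted frame, using only the structural output of Proposition 3.2: Schouten tensor diagonal with $A_{11}=P$, $A_{ab}=Q\,\delta_{ab}$ and $P,Q$ constant on level sets, umbilicity $\nabla_{e_a}e_1=\eta\,e_a$ with $\eta=H/(n-1)$, and radial geodesics $\nabla_{e_1}e_1=0$ (which indeed follows from $\operatorname{Hess}f(e_1,e_a)=\tfrac12 g_{1a}-R_{1a}=0$). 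I checked the frame computation: $\nabla_aA_{bc}=0$ (the normal contributions hit $A_{1c}=0$, the tangential ones cancel by metric compatibility), $C_{ab1}=C_{1a1}=0$, and $C_{1ab}=\bigl(Q'-\eta(P-Q)\bigr)\delta_{ab}$, the rotation coefficients $\langle\nabla_{e_1}e_a,e_b\rangle$ cancelling by antisymmetry. Your closing move — applying the universal trace identity $g^{ik}C_{ijk}=0$ from (2.4) with $j=1$ to get $(n-1)\bigl(Q'-\eta(P-Q)\bigr)=0$ — is a legitimate and rather elegant substitute for the paper's curvature computation: the only surviving components form a pure-trace family, which trace-freeness forbids. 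What the paper's route buys in exchange is that the vanishing of $W_{1abc}$ and $W_{1a1b}$ is established along the way, which is what feeds into Lemma 4.3 and Theorem 5.2(b); in your approach these are recovered afterwards from Lemma 3.1 (once $C_{ijk}=0$ and $D_{ijk}=0$, one gets $W_{ijkl}\nabla_lf=0$), which is exactly how the paper's own Lemma 4.3 proceeds, so nothing downstream is lost. Your localization remark is also the right one: both arguments are purely local near points where $\nabla f\neq 0$, which is precisely where Proposition 3.2 applies.
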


\begin{proof}

First of all, $D_{ijk}=0$ and Lemma 3.1 imply
$$C_{ijk}=-W_{ijkl}\nabla_l f, \eqno(4.2)$$ hence

$$C_{ijk}\nabla_k f=-W_{ijkl}\nabla_k f\nabla_l f=0. \eqno(4.3)$$

Next, for any point $p\in M$ with $\nabla f(p)\ne 0$, we
choose a local coordinates system $(\theta^2,
\cdots, \theta^n)$ on the lever surface $\Sigma=\{f=f(p)\}$. In any neighborhood $U$ of the level surface $\Sigma$ where  $|\nabla f|^2\neq 0$, we
use the local coordinates system 
$$(x^1, x^2, \cdots, x^n)=(f, \theta^2, \cdots,
\theta^n)$$ adapted to level surfaces. In the
following, we use $a,b,c$ to represent indices on the level sets
which ranges from 2 to n, while $i,j,k$ from 1 to n. Under the above
chosen local coordinates system,
the metric $g$ can be expressed as
$$ds^2=\frac{1}{|\nabla f|^2} df^2+g_{ab}(f, \theta) d\theta^a
d\theta^{b}.$$

Next, we denote by $\nu=-\frac{\nabla f}{|\nabla f|}$.  It is
then easy to see that
$$\nu=-|\nabla f|\partial_f, \qquad \mbox{or} \qquad \partial_f=\frac {1} {|\nabla f|^2}\nabla f. $$
Also $\partial_1$ and $\partial_f$ shall be interchangeable below. And we have $$\nabla_1 f=1,\qquad  \mbox{and} \qquad  \nabla_a f=0 \ \mbox{for} \ a\ge 2.
\eqno(4.4)$$ Then, in this coordinate, (4.3) implies that
$$C_{ij1}=0.$$ 

{\bf Claim 1}: $D_{ijk}=0$ implies $C_{abc}=0$ for $a\ge 2, b\ge 2$, and $c\ge 2.$\\

 To show $C_{abc}=0$, we make use
of Proposition 3.2 as follows: from the Codazzi equation (3.4) and $h_{ab}=Hg_{ab}/(n-1)$, we get

$$R_{1cab}=\nabla_a^{\Sigma}h_{bc}-\nabla_b^{\Sigma}h_{ac}=\frac
{1} {n-1} (g_{bc} \partial_a(H) -g_{ac}\partial_b(H)).\eqno(4.5)$$

But we also know that the mean curvature $H$ is constant on the
level surface $\Sigma$ of $f$, so
$$R_{1abc}=0.$$ Moreover, since $R_{1a}=0$, we easily obtain $$W_{1abc}=R_{1abc}=0. $$ By (4.2), we
have  $$C_{abc}=-W_{abci}\nabla_j f g^{ij}=W_{1cab}\nabla_1 f g^{11}=0.$$ This
finishes the proof of Claim 1. \\

{\bf Claim 2}: $D_{ijk}=0$ implies $C_{1ab}=C_{a1b}=0$. \\

To do so, let us  compute the second fundamental form in the
preferred local coordinates system $(f, \theta^2, \cdots,
\theta^n)$:
\begin{align*}
h_{ab}=-<\nu,\nabla_a\partial_b>=-<\nu,\Gamma^1_{ab}\partial_f>=\frac
{\Gamma^1_{ab}}{|\nabla f|}. 
\end{align*}
But the Christoffel symbol $\Gamma^1_{ab}$ is given by
\begin{align*}
\Gamma^1_{ab}=\frac{1}{2}g^{11}(-\frac{\partial g_{ab}}{\partial
f})=\frac 1 2 |\nabla f| \nu(g_{ab}).
\end{align*}
Hence, we obtain

$$h_{ab}=\frac 1 2 \nu(g_{ab}). \eqno(4.6)$$

On the other hand, since $|\nabla f|$ is constant along level
surfaces, we have
$$[\partial_a,\nu]=-[\partial_a,| \nabla f|\partial_f]=0.$$
Then using the fact that $<\nu,\nu>=1$ and $<\nu,\partial_a>=0$,
it is easy to  see that
$$\nabla_\nu\nu=0. \eqno(4.7)$$

By direct computations and using Proposition 3.2,  we can compute the
following component of the Riemannian curvature tensor:
\begin{align*}
Rm(\nu,\partial_a,\nu,\partial_b)
& = <\nabla_{\nu}\nabla_a\partial_b-\nabla_a\nabla_{\nu}\partial_b,\nu>\\
&=<\nabla_{\nu}({\nabla^{\Sigma}}_a\partial_b+\nabla^{\perp}_a\partial_b),\nu>-<\nabla_a\nabla_{\nu}\partial_b,\nu>\\
&=<{\nabla^{\Sigma}}_a\partial_b,-\nabla_{\nu}\nu>+<\nabla_{\nu}(-h_{ab}\nu),\nu>+<\nabla_b\nu,\nabla_a\nu>\\
&=-\nu(h_{ab})+h_{ac} h_{cb}\\
&=-\frac {\nu(H)}{n-1} g_{ab} +\frac {H^2} {({n-1)}^2} g_{ab}.\\
\end{align*}
Taking trace in $a, b$ yields
$$ Rc(\nu,\nu) = -\nu(H)+\frac{H^2}{n-1}.$$
Thus
\begin{align*}
Rm(\nu,\partial_a,\nu,\partial_b)&=-\frac {\nu(H)}{n-1} g_{ab} +\frac {H^2} {(n-1)^2} g_{ab}\\
&=\frac{Rc(\nu,\nu)}{n-1}g_{ab}.\\
\end{align*}

Finally, we are ready to compute $C_{1ab}$:
$$C_{1ab}=-W_{1abi}\nabla_j f g^{ij}=W_{1a1b}|\nabla
f|^2=W(\nu,\partial_a,\nu,\partial_b). \eqno(4.8)$$ 
However, by using proposition 3.2(e), we have:
\begin{align*}
W(\nu,\partial_a,\nu,\partial_b)
&=Rm(\nu,\partial_a,\nu,\partial_b)+\frac{R g_{ab}}{(n-1)(n-2)}
-\frac 1 {n-2}(Rc(\nu,\nu)g_{ab}+R_{ab})\\
&=\frac{Rc(\nu,\nu)}{n-1}g_{ab}+\frac{R g_{ab}}{(n-1)(n-2)}-\frac 1 {n-2}(Rc(\nu,\nu)g_{ab}+ R_{ab})\\
&=\frac{\lambda}{n-1}g_{ab}+\frac{(\lambda+(n-1)\mu)g_{ab}}{(n-1)(n-2)}-\frac 1{n-2} (\lambda g_{ab}+\mu g_{ab})\\
&=0.\\
\end{align*}
Hence, $$C_{1ab}=W_{1a1b}=0. \eqno(4.9)$$ 
This finishes the proof of Claim 2.

Therefore we have shown that  $C_{ij1}=0$, $C_{abc}=0$ and $C_{1ab}=0$. This proves Lemma 4.2. 

\end{proof}

For dimension $n=4$, we can prove a stronger result:
\begin{lemma} Let $(M^4, g_{ij}, f)$ be a complete gradient shrinking Ricci soliton with vanishing $D_{ijk}$, then the Weyl tensor $W_{ijkl}=0$ at all points
where $\nabla f\ne 0$.
\end{lemma}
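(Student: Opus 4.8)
The plan is to leverage Lemma 4.2 together with the dimension-four identity (4.2), and then exploit the fact that the Weyl tensor vanishes identically on any three-dimensional space. First, since $D_{ijk}=0$, Lemma 4.2 gives $C_{ijk}=0$ at every point $p$ where $\nabla f\ne 0$. Substituting this into (4.2) yields the purely algebraic constraint $W_{ijkl}\nabla_l f=0$ wherever $\nabla f\ne 0$. Working in the adapted orthonormal frame $\{e_1,e_2,e_3,e_4\}$ of Proposition 3.2 with $e_1=\nabla f/|\nabla f|$, this constraint says precisely that $W_{ijk1}=0$ for all $i,j,k$; by the pair symmetries and antisymmetries of the Weyl tensor it then follows that every component of $W$ carrying at least one index equal to $1$ (the $\nu$-direction) must vanish.

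Second, I would observe that the only components of $W$ that could survive are the purely tangential ones $W_{abcd}$ with $a,b,c,d\in\{2,3,4\}$, namely those built from directions tangent to the three-dimensional level surface $\Sigma=\{f=f(p)\}$. These inherit from $W$ all the algebraic symmetries of a curvature tensor: skew-symmetry in each pair, symmetry under interchange of the two pairs, and the first Bianchi identity.

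Third comes the key step. I would use that the Weyl tensor is totally trace-free, which in the chosen orthonormal frame reads $\sum_{i}W_{ijil}=0$ for all $j,l$. Splitting off the $i=1$ term, which vanishes by the first step, leaves $\sum_{a=2}^{4}W_{ajal}=0$; restricting $j,l$ to tangential directions shows that the tangential tensor $W_{abcd}$ is a trace-free algebraic curvature tensor on the three-dimensional tangent space of $\Sigma$. Since the Weyl part of any algebraic curvature tensor vanishes in dimension three, such a trace-free curvature tensor must be identically zero; hence $W_{abcd}=0$. Combining this with the first step gives $W_{ijkl}=0$ at all points where $\nabla f\ne 0$, as claimed.

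I expect the only delicate point to be this last step: one must verify that the restricted tensor $W_{abcd}$ is genuinely trace-free as a tensor on the three-dimensional tangent space of $\Sigma$, so that the $i=1$ contribution to the full four-dimensional trace really drops out, and then correctly invoke the vanishing of the Weyl tensor in dimension three. Everything else is immediate from Lemma 4.2, the identity (4.2), and the symmetries of the Weyl tensor recorded in Section 2.
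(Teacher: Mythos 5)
Your proposal is correct and follows essentially the same route as the paper: reduce via Lemma 4.2 and (4.2) to $W_{ijkl}\nabla_l f=0$, kill all components with an index in the $e_1$-direction, and then conclude that the remaining tangential block $W_{abcd}$ is a trace-free algebraic curvature tensor on a three-dimensional space and hence vanishes. The only cosmetic difference is that the paper verifies this last three-dimensional fact by Hamilton's explicit index computation, whereas you invoke it abstractly; the paper itself notes that its computation ``essentially reduces to showing the Weyl tensor is zero in 3 dimensions.''
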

\begin{proof}
From Lemma 4.2 we know that $D_{ijk}=0$ implies $C_{ijk}=0$. Hence it follows from
Lemma 3.1 that
$$W_{ijkl}\nabla_l f=0$$ for all $1\le i, j, k, l\le 4$. For any
$p$ where $|\nabla f|\ne 0$, we can attach an orthonormal frame at $p$ with $e_1=\frac {\nabla f}{|\nabla f|}$, and then we have
$$W_{1ijk}(p)=0, \qquad \mbox{for} \ \ 1\le i, j, k \le 4. \eqno(4.10)$$ Thus it remains to show
$$W_{abcd}(p)=0$$ for all $2\le a, b, c,d \le 4$. However, this
essentially reduces to showing the Weyl tensor is zero in 3
dimensions (cf. \cite{Ha82}, p.276--277): observing that the Weyl
tensor $W_{ijkl}$ has all the symmetry of the $R_{ijkl}$  and is
trace free in any two indices. Thus, 
$$W_{2121}+W_{2222}+W_{2323}+W_{2424}=0,$$ and so, by (4.10),
$$W_{2323}=-W_{2424}.$$
Similarly, we have
$$W_{2424}=-W_{3434}=W_{2323},$$ which implies $W_{2323}=0.$ On the
other hand,
$$W_{1314}+ W_{2324}+W_{3334}+W_{4344}=0,$$ so $W_{2324}=0.$ This
shows that $W_{abcd}=0$ unless $a, b, c, d$ are all distinct.
But, there are only three choices for the indices $a, b, c, d$ as they range from 2 to 4.
\end{proof}

\medskip

Now we are ready to finish the proof of our main theorems: \\

\noindent {\bf Conclusion of the proof of Theorem 1.1}:  \ Let
$(M^4, g_{ij}, f)$ be a complete Bach-flat gradient shrinking
Ricci soliton. Then, by Lemma 4.1, we know $D_{ijk}=0$. We divide the arguments into two cases: \\

$\bullet$ Case 1:  the set $\Omega =\{p\in M | \nabla f(p)\ne 0\}$
is dense. 

\smallskip
By Lemma 4.1 and Lemma 4.3, we know that $W_{ijkl}=0$ on
$\Omega$. By continuity, we know that $W_{ijkl}=0$ on $M^4$. Therefore we
conclude that $(M^4, g_{ij}, f)$ is locally conformally flat.
Furthermore, according to the classification result for locally
conformally flat gradient shrinking Ricci solitons mentioned in
the introduction, $(M^4, g_{ij}, f)$ is a finite quotient of
either $\mathbb R^4$, or $\mathbb S^3\times
\mathbb R$. \\

$\bullet$  Case 2:  $|\nabla f|^2=0$ on some nonempty open set. In
this case, since any gradient shrinking Ricci soliton is analytic
in harmonic coordinates, it follows  that $|\nabla f|^2=0$ on $M$,
i.e., $(M^4, g_{ij})$ is Einstein.

This completes the proof of Theorem 1.1. 

\qed

\medskip
\noindent {\bf Conclusion of the proof of Theorem 1.2}: Let $(M^n,
g_{ij}, f)$, $n\ge 5$, be a Bach-flat gradient shrinking Ricci soliton. Then, by Lemma 4.1, Lemma 4.2
and the same argument as in the proof of Theorem 1.1 above, we know that $(M^n, g_{ij}, f)$ either is Einstein,
or has harmonic Weyl tensor. In the latter case, by the rigidity
theorem of Fern\'andez-L\'opez and Garc\'ia-R\'io \cite{FG}  and
Munteanu-Sesum \cite{MS} for harmonic Weyl tensor, $(M^n, g_{ij},
f)$ is either Einstein or isometric to a finite quotient of of $N^{n-k}\times \mathbb
R^{k}$ ($k>0$) the product of an Einstein manifold $N^{n-k}$ with the
Gaussian shrinking soliton $\mathbb R^{k}$. However, Proposition
3.2 (e) says that the Ricci tensor either has one unique
eigenvalue or two distinct eigenvalues with multiplicity of 1 and
$n-1$ respectively. Therefore, only $k=1$ and $k=n$ can occur in
$N^{n-k}\times \mathbb R^{k}$.

\qed

\medskip

\section{Gradient Ricci solitons with vanishing $D_{ijk}$}

First of all, we notice that the proofs of Lemma 4.2 and Lemma 4.3 are valid for gradient steady and expanding Ricci solitons as well. Hence we have the following
general result.

\begin{theorem} Let $(M^n, g_{ij}, f)$ ($n\ge 4$) be a complete non-trivial gradient Ricci soliton
satisfying (3.1) and with $D_{ijk}=0$,  then 

\medskip
(i) the Weyl tensor $W_{ijkl}=0$ for $n=4$, i.e.,  $(M^n, g_{ij}, f)$ is locally conformally flat;

\medskip
(ii) the Cotton tensor $C_{ijk}=0$ for $n\ge
5$, i.e.,  $(M^n, g_{ij}, f)$ has  harmonic Weyl tensor.
\end{theorem}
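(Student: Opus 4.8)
The plan is to recognize that Theorem 5.1 follows directly from Lemma 4.2 and Lemma 4.3 once one checks that neither proof actually uses the shrinking hypothesis, supplemented by a density argument to upgrade the pointwise conclusions (valid only where $\nabla f\ne0$) to all of $M^n$. Note that since $D_{ijk}=0$ is now assumed outright, we bypass Lemma 4.1 entirely — which is the only genuinely shrinking-specific ingredient in Section 4, relying as it does on the compactness of sublevel sets from Lemma 2.3.

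First I would verify that the constant $\rho$ in (3.1) plays no role in the ingredients feeding Lemma 4.2 and Lemma 4.3. Indeed, Lemma 3.1, Proposition 3.1 and Proposition 3.2 are all stated and proved for an arbitrary gradient Ricci soliton satisfying (3.1), with no restriction on the sign of $\rho$. The proof of Lemma 4.2 invokes only these three results together with the Codazzi equation (3.4) and the local computations of $h_{ab}=\tfrac12\nu(g_{ab})$, of $\nabla_\nu\nu=0$, and of $Rm(\nu,\partial_a,\nu,\partial_b)$ in the adapted coordinates, none of which refer to $\rho$. Likewise, the proof of Lemma 4.3 uses only Lemma 4.2, Lemma 3.1, and the dimensional-reduction argument for the Weyl tensor special to $n=4$. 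Hence, for any gradient Ricci soliton (steady, shrinking, or expanding) with $D_{ijk}=0$, one obtains $C_{ijk}=0$ for all $n\ge4$, and $W_{ijkl}=0$ when $n=4$, at every point where $\nabla f\ne0$.

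It then remains to extend these identities from the open set $\Omega=\{p\in M\mid\nabla f(p)\ne0\}$ to all of $M^n$. Here I would use that a non-trivial gradient Ricci soliton has non-constant potential $f$, and that its metric and $f$ are real-analytic in harmonic coordinates (a fact independent of the sign of $\rho$, since the soliton system is elliptic in such coordinates). If $\nabla f$ vanished on a nonempty open set, analyticity would force $f$ to be constant on the connected manifold $M$, contradicting non-triviality; therefore $\Omega$ is open and dense. Since $C_{ijk}$ and $W_{ijkl}$ are continuous, their vanishing on $\Omega$ propagates to all of $M^n$, yielding (ii) for $n\ge5$ and (i) for $n=4$.

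The step I expect to demand the most care is the first one: confirming that the algebra in the proof of Lemma 4.2 genuinely closes independently of $\rho$. The potential worry is Proposition 3.2(e), where the Ricci eigenvalues $\lambda$ and $\mu$ are given by the $\rho$-dependent formulas (3.5)–(3.6). However, the cancellation producing $W(\nu,\partial_a,\nu,\partial_b)=0$ in (4.8) uses only the structural fact that $Rc$ has eigenvalue $\lambda$ in the $\nu$-direction and eigenvalue $\mu$ of multiplicity $n-1$ along the level surface, together with $R=\lambda+(n-1)\mu$. These relations hold for every $\rho$, so the explicit dependence on $\rho$ drops out and the conclusion is unaffected.
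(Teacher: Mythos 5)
Your proposal is correct and follows essentially the same route as the paper: the paper's proof of Theorem 5.1 is precisely the observation that the arguments of Lemma 4.2 and Lemma 4.3 never use the shrinking hypothesis (Lemma 4.1, which does, is bypassed since $D_{ijk}=0$ is assumed), combined with the analyticity/density argument from the proof of Theorem 1.1 to pass from $\{\nabla f\ne 0\}$ to all of $M^n$. Your explicit verification that the eigenvalue cancellation in Claim 2 of Lemma 4.2 uses only the structural relations $R_{ab}=\mu g_{ab}$, $Rc(\nu,\nu)=\lambda$, $R=\lambda+(n-1)\mu$ (and not the $\rho$-dependent formulas (3.5)--(3.6)) is a careful spelling-out of what the paper asserts in one sentence, not a different method.
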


As an immediate consequence of Theorem 5.1, the classification theorem for locally conformally flat gradient shrinking solitons and  
the  rigidity theorem for gradient shrinking solitons with harmonic Weyl tensor mentioned in the introduction, and Proposition 3.2 (e),
we have the following rigidity theorem for gradient shrinking Ricci solitons with vanishing $D_{ijk}$: 

\begin{corollary} Let $(M^n, g_{ij}, f)$ ($n\ge 4$) be a complete gradient {\sl shrinking} Ricci soliton with 
$D_{ijk}=0$,  then 

\medskip
(i)  $(M^4, g_{ij}, f)$ is either Einstein, or a finite quotient of $\mathbb R^4$ or $\mathbb S^3\times
\mathbb R$; 

\medskip
(ii) for $n\ge 5$, $(M^n, g_{ij}, f)$ is either Einstein, or a finite quotient of the Gaussian shrinking soliton $\mathbb
R^{n}$, or  a finite quotient of  $N^{n-1}\times \mathbb R$, where
$N^{n-1}$ is Einstein.
\end{corollary}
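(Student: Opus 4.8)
The plan is to follow, almost verbatim, the reasoning that concluded Theorems 1.1 and 1.2, the only difference being that the single input we previously extracted from Bach-flatness via Lemma 4.1 --- namely $D_{ijk}=0$ --- is now part of the hypothesis. Consequently Theorem 5.1 applies at once and supplies exactly the curvature vanishing we need, so no new analytic work is required and the proof is essentially an assembly of results already in hand.

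First I would organize the argument by the dimension split built into Theorem 5.1. For $n=4$, Theorem 5.1(i) gives $W_{ijkl}=0$ at every point where $\nabla f\ne 0$; for $n\ge 5$, Theorem 5.1(ii) gives $C_{ijk}=0$ (harmonic Weyl) at all such points. In either dimension I would then run the same density/analyticity dichotomy as in the Conclusion of the proof of Theorem 1.1: either the set $\Omega=\{\nabla f\ne 0\}$ is dense, in which case the relevant tensor vanishes on all of $M^n$ by continuity, or $|\nabla f|^2\equiv 0$ on a nonempty open set, in which case analyticity of gradient shrinking solitons in harmonic coordinates forces $f$ to be constant and $(M^n, g_{ij})$ to be Einstein. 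This last possibility is precisely the ``Einstein'' alternative in both (i) and (ii).

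In the nontrivial branch I would invoke the known classifications. For $n=4$ the manifold is locally conformally flat, so by the classification of locally conformally flat gradient shrinking solitons (\cite{Zh2}, \cite{NW1}) it is a finite quotient of $\mathbb S^4$, $\mathbb R^4$, or $\mathbb S^3\times\mathbb R$; the round sphere is Einstein and hence already counted in alternative (i), leaving exactly $\mathbb R^4$ and $\mathbb S^3\times\mathbb R$ as stated. For $n\ge 5$ the manifold has harmonic Weyl tensor, so by the rigidity theorem of Fern\'andez-L\'opez and Garc\'ia-R\'io \cite{FG} and Munteanu-Sesum \cite{MS} it is a finite quotient of a product $N^{n-k}\times\mathbb R^{k}$ with $N^{n-k}$ Einstein.

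The one genuine point to handle --- and the step I expect to be the crux --- is ruling out the intermediate splittings with $1<k<n$. Here I would apply Proposition 3.2(e): on each regular level surface the Ricci tensor has either a single eigenvalue (multiplicity $n$) or exactly two eigenvalues, of multiplicities $1$ and $n-1$. For $N^{n-k}\times\mathbb R^{k}$ the Ricci eigenvalues are the positive Einstein constant of $N$ with multiplicity $n-k$ and $0$ with multiplicity $k$; matching the admissible multiplicity patterns $(n)$ and $(1,n-1)$ forces either $k=n$, giving the Gaussian shrinking soliton $\mathbb R^n$, or $k=1$, giving $N^{n-1}\times\mathbb R$ with $N^{n-1}$ Einstein of positive scalar curvature since the soliton shrinks. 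The apparent case $k=n-1$ degenerates, because a one-dimensional factor is flat and returns us to the Gaussian case. This yields exactly the three alternatives of (ii) and completes the argument.
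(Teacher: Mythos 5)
Your proposal is correct and follows essentially the same route as the paper: the paper derives this corollary as an immediate consequence of Theorem 5.1, the classification of locally conformally flat gradient shrinking solitons, the rigidity theorem for harmonic Weyl tensor \cite{FG, MS}, and Proposition 3.2(e), combined with the same density/analyticity dichotomy used to conclude Theorems 1.1 and 1.2. Your additional care in absorbing the $\mathbb S^4$ case into the Einstein alternative and in checking the degenerate splitting $k=n-1$ only makes explicit what the paper leaves implicit.
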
 

Moreover, combining Theorem 5.1 (i) and the 4-d classification theorem for locally conformally flat 
gradient steady Ricci solitons \cite{CC09, CM}, we have 

\begin{corollary} Let $(M^4, g_{ij}, f)$  be a complete gradient {\sl steady} Ricci soliton with $D_{ijk}=0$,  then 
 $(M^4, g_{ij}, f)$ is either Ricci flat or isometric to the Bryant soliton.
\end{corollary}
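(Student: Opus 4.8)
The plan is to reduce the corollary entirely to the local conformal flatness already established in Theorem 5.1 (i), combined with the known classification of locally conformally flat gradient steady Ricci solitons. First I would dispose of the trivial case. If the potential function $f$ is constant, then $\nabla_i\nabla_j f=0$, and the steady soliton equation, namely (3.1) with $\rho=0$, forces $R_{ij}=0$; thus $(M^4, g_{ij})$ is Ricci flat and the first alternative holds. Hence I may assume $f$ is non-constant, so that $(M^4, g_{ij}, f)$ is a non-trivial gradient steady soliton.

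Since $D_{ijk}=0$ and $n=4$, I would then apply Theorem 5.1 (i) to conclude that the Weyl tensor vanishes identically, so that $(M^4, g_{ij})$ is locally conformally flat. It is worth recalling why the conclusion is global rather than merely valid on $\Omega=\{\nabla f\ne 0\}$: Lemma 4.3, whose proof is valid verbatim in the steady case (as noted at the start of Section 5), gives $W_{ijkl}=0$ only on $\Omega$; but when $f$ is non-constant the set $\Omega$ is dense, for otherwise $\nabla f$ would vanish on a nonempty open set and the analyticity of the soliton metric in harmonic coordinates would force $f$ to be constant, a contradiction. Continuity of $W$ then propagates $W\equiv 0$ to all of $M^4$.

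Finally I would invoke the classification of complete locally conformally flat gradient steady Ricci solitons of Cao-Chen \cite{CC09} (together with \cite{CM}): any such soliton is either flat, hence Ricci flat, or rotationally symmetric, hence isometric to the Bryant soliton. Applying this in dimension four yields exactly the stated dichotomy. The genuinely substantive ingredient is therefore the external classification theorem rather than any new computation; recall that its proof rests on the curvature sign $R\ge 0$ from Lemma 2.2 and on the warped product structure of the metric (cf. Remark 3.2), whose radial profile is then pinned down by the steady equation. The only point demanding care is the non-triviality hypothesis required to apply Theorem 5.1, which is precisely what the reduction in the first paragraph secures.
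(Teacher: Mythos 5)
Your proposal is correct and follows essentially the same route as the paper, which simply combines Theorem 5.1(i) with the classification of locally conformally flat gradient steady solitons from \cite{CC09, CM}. The extra care you take with the trivial case ($f$ constant $\Rightarrow$ Ricci flat) and with the density/analyticity argument needed to pass from $W=0$ on $\{\nabla f\neq 0\}$ to $W\equiv 0$ on $M$ is exactly what the paper leaves implicit (it is the same argument as Case 1/Case 2 in the proof of Theorem 1.1), so your write-up is a faithful, slightly more detailed version of the paper's proof.
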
 

\medskip

Finally, let us further examine the relations among $D_{ijk}$, $C_{ijk},$, $W_{ijkl}$ and $B_{ij}$. Note that Theorem 5.1(ii) tells 
us that for any nontrivial gradient Ricci soliton, $D_{ijk}=0$ implies $C_{ijk}=0$. On the other hand, the converse is 
not true because the product space $\mathbb{S}^{k}\times {\mathbb R}^{n-k}$ has $C_{ijk}=0$ but not $D_{ijk}=0$ by Proposition 3.2(e) for $k\ge 2$ and $n-k\ge 2$. 
So one naturally would wonder how much stronger is the condition $D_{ijk}=0$ than $C_{ijk}=0$? It turns out that we have several equivalent characterizations of $D_{ijk}=0$. 

\begin{theorem} Let $(M^n, g_{ij}, f)$ ($n\ge 5$) be a nontrivial gradient Ricci soliton satisfying (3.1). Then the following statements are equivalent: 

\smallskip
(a) $D_{ijk}=0$;

\smallskip
(b) $C_{ijk}=0$, and $W_{1ijk}=0$ for $1\le i,j,k\le n$; 

\smallskip
(c) $\div B\cdot \nabla f=0$ and $W_{1a1b}=0$  for $2\le a, b\le n$.
\end{theorem}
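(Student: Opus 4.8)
The plan is to prove the cycle of implications $(a)\Rightarrow(b)\Rightarrow(c)\Rightarrow(a)$, with the first two steps being quick consequences of the machinery already in place and the third carrying all the weight. For $(a)\Rightarrow(b)$: assuming $D_{ijk}=0$, Lemma 4.2 gives $C_{ijk}=0$ at every point where $\nabla f\ne 0$ (hence everywhere by analyticity, since a nontrivial soliton has $f$ non-constant and so $\{\nabla f\ne 0\}$ is dense); Lemma 3.1 then reduces to $W_{ijkl}\nabla_l f=0$, which in the frame $e_1=\nabla f/|\nabla f|$ is precisely $W_{1ijk}=0$. The converse $(b)\Rightarrow(a)$ is immediate from Lemma 3.1, since $D_{ijk}=C_{ijk}+W_{ijkl}\nabla_l f$ and, in the same frame, $W_{ijkl}\nabla_l f=|\nabla f|\,W_{ijk1}$ vanishes when $W_{1ijk}=0$.

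For $(b)\Rightarrow(c)$: statement $(b)$ already forces $D_{ijk}=0$ and $C_{ijk}=0$, so substituting into (4.1) gives $B_{ij}=0$ outright; hence $\div B=0$, and in particular $\div B\cdot\nabla f=0$, while $W_{1a1b}=0$ is a special case of $W_{1ijk}=0$. Thus the entire content of the theorem is concentrated in the remaining implication $(c)\Rightarrow(a)$.

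For $(c)\Rightarrow(a)$ I would argue pointwise on the dense open set $\{\nabla f\ne 0\}$, working in the adapted frame $e_1=\nabla f/|\nabla f|$. The first, purely algebraic, observation is that the hypothesis $W_{1a1b}=0$ kills the Weyl--Ricci contraction attached to the radial direction of the Bach tensor: by (2.3) one has $B_{11}=\tfrac{1}{n-2}(\nabla_k C_{k11}+R_{kl}W_{1k1l})$, and in this frame $R_{kl}W_{1k1l}=R_{ab}W_{1a1b}$, which drops out once $W_{1a1b}=0$. The main step is then to turn (4.1) into a closed pointwise formula for $\div B\cdot\nabla f=\nabla_i B_{ij}\nabla_j f$: I would take the divergence of (4.1), contract with $\nabla f$, and systematically trade every derivative of curvature for an algebraic expression using the soliton identities --- the Hessian relation $\nabla_i\nabla_j f=\tfrac12 g_{ij}-R_{ij}$ from (1.1), the contracted identity $\nabla_i R=2R_{ij}\nabla_j f$ from (2.5), the commutation $\nabla_i R_{jk}-\nabla_j R_{ik}=-R_{ijkl}\nabla_l f$ established in the proof of Lemma 3.1, and the second Bianchi identity (2.2) for the divergence of $W$. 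After this reduction, $\div B\cdot\nabla f$ should collapse, modulo terms carrying the factor $W_{1a1b}$, to a nonnegative multiple of the two manifestly nonnegative pieces appearing in Proposition 3.1, namely $|h_{ab}-\tfrac{H}{n-1}g_{ab}|^2$ and $|\nabla_a R|^2$ --- that is, to a multiple of $|D_{ijk}|^2$. Imposing both hypotheses of $(c)$ then forces $|D_{ijk}|^2=0$ on $\{\nabla f\ne 0\}$, and $D_{ijk}=0$ everywhere by continuity.

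The hard part will be exactly this reduction of $\div B\cdot\nabla f$ to an algebraic quantity on the soliton, together with the verification that, after $W_{1a1b}=0$ is used to discard the residual Weyl--Ricci contractions, what survives is genuinely proportional to $|D_{ijk}|^2$ rather than some indefinite combination; keeping track of the commutator and Bianchi terms and confirming the sign and nondegeneracy of the resulting coefficient is where the real computation lies. One should note that a purely integral route built on Lemma 4.1 does not suffice here: contracting the pointwise relation $\div B\cdot\nabla f=0$ against $f$ and integrating by parts merely reproduces Lemma 4.1 tautologically and yields no new information, so a genuine pointwise identity for $\div B\cdot\nabla f$ is needed.
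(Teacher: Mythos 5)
Your overall architecture is sound: the cycle $(a)\Rightarrow(b)\Rightarrow(c)\Rightarrow(a)$ is the right frame, your handling of $(a)\Rightarrow(b)$ matches the paper's (modulo citing Lemma 4.2 rather than its general-soliton version, Theorem 5.1), and your shortcut for $(b)\Rightarrow(c)$ --- observing that $(b)$ forces $D_{ijk}=0$ and hence $B_{ij}=0$ outright via (4.1) --- is a legitimate alternative to the paper's route. You also correctly diagnose that the whole theorem lives in $(c)\Rightarrow(a)$, that an integral argument \`a la Lemma 4.1 cannot work there, and that what is needed is a pointwise identity expressing $\div B\cdot\nabla f$ as a positive multiple of $|D_{ijk}|^2$ plus terms carrying $W_{1a1b}$. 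But that identity is precisely what you never prove: you assert that after trading derivatives for algebraic expressions the quantity ``should collapse'' to such a form, and you yourself flag the sign, the nondegeneracy of the coefficient, and the bookkeeping of the commutator and Bianchi terms as unresolved. As it stands, $(c)\Rightarrow(a)$ is a plan, not a proof, and it is the entire content of the statement.

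The missing ingredient is the paper's Lemma 5.1, a purely Riemannian identity requiring no soliton equation at all:
$$\nabla_j B_{ij}=\frac{n-4}{(n-2)^2}\,C_{ijk}R_{jk},$$
proved in a few lines from the expression (2.3) for $B_{ij}$, the commutation rule applied to $\nabla_i\nabla_k(\nabla_kA_{ij}-\nabla_iA_{kj})$, and (2.2). Once this is in hand, the soliton structure enters only algebraically: Lemma 3.1 gives $C_{ijk}=D_{ijk}-W_{ijkl}\nabla_l f$, and the skew-symmetry and trace-free properties (3.3) yield $D_{ijk}R_{jk}\nabla_i f=\tfrac{n-2}{2}|D_{ijk}|^2$, whence
$$\div B\cdot\nabla f=\frac{n-4}{2(n-2)}|D_{ijk}|^2+\frac{n-4}{(n-2)^2}W_{1a1b}R_{ab}|\nabla f|^2,$$
and $(c)$ forces $D_{ijk}=0$ pointwise. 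Note the factor $n-4$: it is exactly why the theorem is restricted to $n\ge 5$, since in dimension four the Bach tensor is identically divergence-free and hypothesis $(c)$ carries no information --- a point your sketch never surfaces. Your proposed alternative of differentiating (4.1) is also structurally worse than this: it introduces second derivatives $\nabla_i\nabla_k D_{ikj}$ that you have no identity to control, whereas the paper's computation never differentiates $D_{ijk}$ at all.
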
 

\begin{proof}  $(a) \rightarrow (b)$:  This follows from Theorem 5.1 and Lemma 3.1.

\medskip
$(b) \rightarrow (c)$:  Clearly, it suffices to show that $C_{ijk}=0$ implies $\div B\cdot \nabla f=0$. In fact, $C_{ijk}=0$ implies $\div B=0$ for $n\ge 5$. 
This follows from the following formula, which is well-known at least for $n=4$ among experts in conformal geometry and general relativity:

\begin{lemma}\label{divbach} For $n\ge 4$, we have 
$$
\div B \equiv \nabla_j B_{ij}=\frac{n-4}{(n-2)^2} C_{ijk}R_{jk}\,.
$$
\end{lemma}

\noindent {\sl Proof of Lemma 5.1.}  Recall that we have 
$$
C_{ijk}=\nabla_i A_{jk}-\nabla_j A_{ik}\,,
$$
and 
$$W_{ijkl}=R_{ijkl} - \frac{1}{n-2} (g_{ik}A_{jl}-g_{il}A_{jk}-g_{jk}A_{il}+g_{jl}A_{ik})\,. \eqno(5.1)$$
By using the expression of the Bach tensor in (2.3), we have
$$
(n-2)\nabla_i B_{ij}= \nabla_i \nabla_k (\nabla_kA_{ij}-\nabla_iA_{kj}) + \nabla_k R_{kl} W_{ikjl}+ R_{kl}\nabla_kW_{ikjl}\,.
$$
But, 
\begin{align*}
\nabla_i \nabla_k (\nabla_kA_{ij}-\nabla_iA_{kj})= & (\nabla_i \nabla_k-\nabla_k \nabla_i)\nabla_kA_{ij}\\
 =& -R_{il}\nabla_lA_{ij} +R_{kl}\nabla_kA_{lj}+R_{ikjl}\nabla_kA_{il}\\
= &R_{ikjl}\nabla_kA_{il}\,.
\end{align*}
Thus, by using (5.1),  
\begin{align*}
\nabla_i \nabla_k (\nabla_kA_{ij}-\nabla_iA_{kj}) + \nabla_k R_{kl} W_{ikjl}= & (R_{ikjl}-W_{ikjl})\nabla_kA_{il}\\
=& \frac{1}{n-2} (A_{jk}g_{il}C_{lki} +A_{ik}C_{kji})\\
=&-\frac{1}{n-2} R_{ki}C_{jki}\,.
\end{align*}
Moreover, by (2.2), we know 
$$
\nabla_kW_{ikjl}=\frac{n-3}{n-2}C_{jlk}\,.
$$
Summing up, we obtain 
$$
(n-2)\nabla_i B_{ij}=\frac{n-4}{n-2}R_{kl}C_{jkl}\,.
$$
\qed

$(c) \rightarrow (a)$: by Lemma 5.1,  Lemma 3.1 and (3.3),  we have
\begin{align*}
\div B\cdot \nabla f & =\frac{n-4}{(n-2)^2} C_{ijk}R_{jk}\nabla_i f \\ 
& =\frac{n-4}{(n-2)^2} (D_{ijk}-W_{ijkl}\nabla_l f) R_{jk}\nabla_i f \\
& =\frac{n-4}{2(n-2)} |D_{ijk}|^2+\frac{n-4}{(n-2)^2} W_{1a1b} R_{ab}|\nabla f|^2.  
\end{align*}
Thus, $\div B\cdot \nabla f =0$ and  $W_{1a1a}=0$ for $2\le a\le n$ imply $D_{ijk}=0$ for all $1\le i,j,k\le n$. 

\smallskip
This completes the proof of Theorem 5.2. 

\end{proof}

\end{document}